\definecolor{webgreen}{rgb}{0,.5,0}
\definecolor{webbrown}{rgb}{.6,0,0}
\def\C{{\Bbb C}}
\def\N{{\Bbb N}}
\def\Z{{\Bbb Z}}
\def\RE{\operatorname{Re}}
\def\id{\operatorname{id}}
\def\lcm{\operatorname{lcm}}
\def\a{{\bf a}}
\newcommand{\DOT}{\text{\rm\Huge{.}}}
\newtheorem{theorem}{Theorem}
\newtheorem{cor}{Corollary}
\newtheorem{lemma}{Lemma}
\begin{document}

\title{\bf Sums of products of Ramanujan sums}
\author{L\'aszl\'o T\'oth \thanks{The author gratefully acknowledges support from the
Austrian Science Fund (FWF) under the project Nr. P20847-N18.}\\ \\
Department of Mathematics, University of P\'ecs \\ Ifj\'us\'ag u. 6, H-7624 P\'ecs, Hungary \\
and
\\ Institute of Mathematics, Department of Integrative Biology \\
Universit\"at f\"ur Bodenkultur, Gregor Mendel-Stra{\ss}e 33, A-1180
Wien, Austria \\  E-mail: ltoth@gamma.ttk.pte.hu}
\date{}
\maketitle

\centerline{Annali dell' Universita di Ferrara, Volume 58, Number 1 (2012), 183-197}

\begin{abstract} The Ramanujan sum $c_n(k)$ is defined as the sum
of $k$-th powers of the primitive $n$-th roots of unity. We
investigate arithmetic functions of $r$ variables defined as certain
sums of the products $c_{m_1}(g_1(k))\cdots c_{m_r}(g_r(k))$, where
$g_1,\ldots, g_r$ are polynomials with integer coefficients. A
modified orthogonality relation of the Ramanujan sums is also
derived.
\end{abstract}

{\it Mathematics Subject Classification}: 11A25, 11A07, 11N37

{\it Key Words and Phrases}: Ramanujan sum, arithmetic function of
several variables, multiplicative function, simultaneous
congruences, even function, Cauchy convolution, average order

\section{Introduction}

Let $c_n(k)$ denote, as usual, the Ramanujan sum defined as the sum
of $k$-th powers of the primitive $n$-th roots of unity ($k\in \Z$,
$n\in \N:=\{1,2,\ldots\}$), i.e.,
\begin{equation}  \label{Ramanujan_sum}
c_n(k):= \sum_{\substack{j=1 \\ \gcd(j,n)=1}}^n \exp(2\pi i jk/n),
\end{equation}
which can be expressed as
\begin{equation} \label{Ramanujan_repr}
c_n(k)=\sum_{d \mid \gcd(k,n)} d\mu(n/d),
\end{equation}
where $\mu$ is the M\"obius function. It follows from
\eqref{Ramanujan_repr} that all values of $c_n(k)$ are integers. The
sums $c_n(k)$ enjoy the properties
\begin{equation} \label{one_variable_E}
\frac1{n}\sum_{k=1}^{n} c_n(k)=
\begin{cases} 1, \  & n=1, \\  0, & \text{otherwise}, \end{cases}
\end{equation}
\begin{equation} \label{two_variable_E}
\frac1{\lcm(\ell,n)} \sum_{k=1}^{\lcm(\ell,n)} c_{\ell}(k)c_n(k)=
\begin{cases} \phi(n), \  & \ell=n, \\  0, & \text{otherwise}, \end{cases}
\end{equation}
where $\phi$ is Euler's totient function. Ramanujan \cite{Ram1918}
derived pointwise convergent series representations of arithmetic
functions $f\colon \N \to \C$ with respect to the sums
\eqref{Ramanujan_sum}, of the form
\begin{equation*}
f(k)= \sum_{n=1}^{\infty} a_f(n) c_n(k) \qquad (k\in \N)
\end{equation*}
with certain coefficients $a_f(n)$, allowed by the orthogonality
formula \eqref{two_variable_E}. See also the book of Schwarz and
Spilker \cite {SchSpi1994} and the recent survey paper of Lucht
\cite{Luc2010}.

Ramanujan sums play also an important role in the proof of
Vinogradov's theorem concerning the number of representations of an
odd integer as the sum of three primes. See, e.g., Nathanson
\cite[Ch.\ 8]{Nat1994}.

Let $m_1,\ldots,m_r \in \N$ ($r\in \N$) and
$m:=\lcm(m_1,\ldots,m_r)$, throughout the paper. The function
\begin{equation} \label{def_func_E}
E(m_1,\ldots,m_r):= \frac1{m}\sum_{k=1}^m c_{m_1}(k)\cdots
c_{m_r}(k)
\end{equation}
has combinatorial and topological applications, and was investigated
in the recent papers of Liskovets \cite{Lis2010} and of the author
\cite{Tot2011}. Note that all values of $E(m_1,\ldots,m_r)$ are
nonnegative integers. Furthermore, the function $E$ is
multiplicative as a function of several variables (see Section
\ref{section_2} for the definition of this concept). Remark that in
the case of one and two variables \eqref{def_func_E} reduces to
\eqref{one_variable_E} and \eqref{two_variable_E}, respectively.

Recall the following identity, which is due to Cohen \cite[Cor.\
7.2]{Coh1959},
\begin{equation} \label{Cohen_id} \sum_{\substack{k=1\\
\gcd(k,n)=1}}^n c_n(k-a)= \mu(n)c_n(a) \qquad (n\in \N, a\in \Z).
\end{equation}

These suggest to consider the following generalizations of
\eqref{def_func_E} and \eqref{Cohen_id}. Let $G=(g_1,\ldots,g_r)$ be
a system of polynomials with integer coefficients and let define
\begin{equation} \label{def_func_E_G}
E_G(m_1,\ldots,m_r):= \frac1{m} \sum_{k=1}^m c_{m_1}(g_1(k))\cdots
c_{m_r}(g_r(k)),
\end{equation}
\begin{equation} \label{def_func_R_G}
R_G(m_1,\ldots,m_r):= \sum_{\substack{k=1\\\gcd(k,m)=1}}^m
c_{m_1}(g_1(k))\cdots c_{m_r}(g_r(k)),
\end{equation}
where we can assume that $m_i>1$ ($1\le i\le r$), since $c_1(k)=1$
for any $k\in \Z$.

If $g_1(x)=\ldots =g_r(x)=x$, then \eqref{def_func_E_G} reduces to
the function \eqref{def_func_E}. In the one variable case, i.e., $r=1$
and selecting the linear polynomial $g_1(x)=x-a$, \eqref{def_func_R_G} gives the sum in
\eqref{Cohen_id}.

Consider also the sum
\begin{equation} \label{def_T_a}
T_a(m_1,\ldots,m_r):= \sum_{\substack{k_1,\ldots,k_{r-1},\ell \text{ (mod $m$)} \\
\gcd(\ell,m)=1}} c_{m_1}(k_1)\cdots
c_{m_{r-1}}(k_{r-1})c_{m_r}(k_1+\ldots +k_{r-1}+\ell-a),
\end{equation}
where $a\in \Z$, representing another arithmetic function of $r$
variables, which reduces to the sum of \eqref{Cohen_id} for $r=1$.

In this paper we evaluate the functions $E_G$, $R_G$, $T_a$, show
that they are all multiplicative and investigate some special cases, including
the function
\begin{equation} \label{def_func_R}
R(m_1,\ldots,m_r):= \sum_{\substack{k=1 \\
\gcd(k,m)=1}}^m c_{m_1}(k-1)\cdots c_{m_r}(k-1),
\end{equation}
obtained from \eqref{def_func_R_G} by selecting $g_1(x)=\ldots
=g_r(x)=x-1$.

In Section \ref{section_T} we derive a modified orthogonality
relation of the Ramanujan sums by evaluating the function $T_a$.

Similar results as those of the present paper, with $\gcd(g(k),m_i)$
instead of $c_{m_i}(g(k))$ ($1\le i\le r$) in \eqref{def_func_E_G}
and \eqref{def_func_R_G}, generalizing Menon's identity, are
presented in \cite{Tot2011_2}.

\section{Preliminaries} \label{section_2}

We recall that an arithmetic function of $r$ variables is a function
$f:\N^r \to \C$, denoted by $f\in {\cal F}_r$. The function $f$ is
called multiplicative if it is nonzero and
\begin{equation*}
f(m_1n_1,\ldots,m_rn_r)= f(m_1,\ldots,m_r) f(n_1,\ldots,n_r)
\end{equation*}
holds for any $m_1,\ldots,m_r,n_1,\ldots,n_r\in \N$ such that
$\gcd(m_1\cdots m_r,n_1\cdots n_r)=1$.

If $f$ is multiplicative, then it is determined by the values
$f(p^{a_1},\ldots,p^{a_r})$, where $p$ is prime and
$a_1,\ldots,a_r\in \N \cup \{0\}$. More exactly, $f(1,\ldots,1)=1$
and for any $m_1,\ldots,m_r\in \N$,
\begin{equation*}
f(m_1,\ldots,m_r)= \prod_p f(p^{e_p(m_1)}, \ldots,p^{e_p(m_r)}),
\end{equation*}
where $m_i=\prod_p p^{e_p(m_i)}$ are the prime power factorizations
of $m_i$ ($1\le i\le r$), the products are over the primes $p$ and
all but a finite number of the exponents $e_p(m_i)$ are zero.

If $r=1$, i.e., in the case of functions of a single variable we
reobtain the familiar notion of multiplicativity.

For example, the functions $(m_1,\ldots,m_r) \mapsto
\gcd(m_1,\ldots,m_r)$ and $(m_1,\ldots,m_r) \mapsto \linebreak
\lcm(m_1,\ldots,m_r)$ are multiplicative for every $r\in \N$.

The product and the quotient of (nonvanishing) multiplicative
functions are multiplicative. Let $h\in {\cal F}_1$ and $f\in {\cal
F}_r$ be multiplicative functions. Then the functions
$(m_1,\ldots,m_r) \mapsto h(m_1)\cdots h(m_r)$ and $(m_1,\ldots,m_r)
\mapsto h(f(m_1,\ldots,m_r))$ are multiplicative. In particular, \\
$(m_1,\ldots,m_r) \mapsto h(\gcd(m_1,\ldots,m_r))$ and
$(m_1,\ldots,m_r) \mapsto h(\lcm(m_1,\ldots,m_r))$ are
multiplicative.

If $f,g\in {\cal F}_r$, then their convolution is defined by
\begin{equation} \label{convo_functions}
(f*g)(m_1,\ldots,m_r): = \sum_{d_1\mid m_1, \ldots, d_r\mid m_r}
f(d_1,\ldots,d_r) g(m_1/d_1, \ldots, m_r/d_r).
\end{equation}

The convolution \eqref{convo_functions} preserves the
multiplicativity of functions. For these and further properties of
arithmetic functions of several variables we refer to
\cite{Siv1989,Vai1931}.

Let $G=(g_1,\ldots,g_r)$ be a system of polynomials with integer
coefficients and consider the simultaneous congruences
\begin{equation} \label{sim_cong_m}
g_1(x)\equiv 0 \text{ (mod $m_1$)}, \ldots, g_r(x)\equiv 0 \text{
(mod $m_r$)}.
\end{equation}

Let $N_G(m_1,\ldots,m_r)$ denote the number of solutions $x$ (mod
$\lcm(m_1,\ldots,m_r)$) of \eqref{sim_cong_m}. Furthermore, let
$\eta_G(m_1,\ldots,m_r)$ denote the number of solutions $x$ (mod
$\lcm(m_1,\ldots,m_r)$) of  \eqref{sim_cong_m} such that
$\gcd(x,m_1)=1$, ..., $\gcd(x,m_r)=1$.

\begin{lemma} {\rm (\cite[Lemma 1]{Tot2011_2})} \label{lemma_sim_cong}
For every system $G=(g_1,\ldots,g_r)$ of polynomials with integer
coefficients the functions $(m_1,\ldots,m_r) \mapsto
N_G(m_1,\ldots,m_r)$ and $(m_1,\ldots,m_r) \mapsto
\eta_G(m_1,\ldots,m_r)$ are multiplicative.
\end{lemma}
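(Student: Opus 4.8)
The plan is to prove multiplicativity via the Chinese Remainder Theorem, handling both $N_G$ and $\eta_G$ in a uniform way. First I would reduce to the following: given $m_1,\dots,m_r$ and $n_1,\dots,n_r$ with $\gcd(m_1\cdots m_r, n_1\cdots n_r)=1$, set $m=\lcm(m_1,\dots,m_r)$, $n=\lcm(n_1,\dots,n_r)$, so that $\gcd(m,n)=1$ and $\lcm(m_1n_1,\dots,m_rn_r)=mn$. The CRT gives a ring isomorphism $\Z/mn\Z \cong \Z/m\Z \times \Z/n\Z$, under which a residue $x \pmod{mn}$ corresponds to the pair $(x \bmod m,\ x \bmod n)$, and this is a bijection between residues mod $mn$ and pairs of residues mod $m$ and mod $n$.

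Next I would observe that the congruence $g_i(x) \equiv 0 \pmod{m_in_i}$ is, again by CRT (since $\gcd(m_i,n_i)=1$), equivalent to the pair of congruences $g_i(x) \equiv 0 \pmod{m_i}$ and $g_i(x)\equiv 0 \pmod{n_i}$. Moreover $g_i(x) \bmod m_i$ depends only on $x \bmod m_i$, hence only on $x \bmod m$; similarly $g_i(x) \bmod n_i$ depends only on $x \bmod n$. Therefore, under the CRT bijection $x \leftrightarrow (u,v)$ with $u = x \bmod m$, $v = x\bmod n$, the full system \eqref{sim_cong_m} for the moduli $m_1n_1,\dots,m_rn_r$ holds if and only if $u$ satisfies the system for $m_1,\dots,m_r$ and $v$ satisfies the system for $n_1,\dots,n_r$. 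Counting pairs then yields $N_G(m_1n_1,\dots,m_rn_r) = N_G(m_1,\dots,m_r)\,N_G(n_1,\dots,n_r)$. To handle $\eta_G$, I would add the side condition: $\gcd(x, m_in_i) = 1$ iff $\gcd(x,m_i)=1$ and $\gcd(x,n_i)=1$, i.e. iff $\gcd(u,m_i)=1$ and $\gcd(v,n_i)=1$; so the same bijection restricts to a bijection between the $\eta_G$-solution sets, giving the product formula for $\eta_G$ as well. Finally, one checks $N_G$ and $\eta_G$ are not identically zero (e.g. the all-ones argument gives $N_G(1,\dots,1)=\eta_G(1,\dots,1)=1$), so both are multiplicative in the sense of Section \ref{section_2}.

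The only mildly delicate point is bookkeeping with the lcm: one must be careful that $\lcm(m_1n_1,\dots,m_rn_r) = \lcm(m_1,\dots,m_r)\cdot\lcm(n_1,\dots,n_r)$ holds precisely because $\gcd(m_1\cdots m_r, n_1\cdots n_r)=1$ forces, prime by prime, that each prime power appearing in some $m_i$ is coprime to every $n_j$ and vice versa, so the two lcm's are coprime and their product is the overall lcm. Everything else is a routine application of CRT, and there is no real obstacle; the structure of the argument is exactly the standard ``count solutions prime-locally'' proof that solution-counting functions of congruence systems are multiplicative.
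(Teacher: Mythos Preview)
Your argument is correct and is the standard CRT proof of this fact. Note, however, that the paper does not actually prove Lemma~\ref{lemma_sim_cong}; it merely quotes it from \cite[Lemma~1]{Tot2011_2}, so there is no ``paper's own proof'' to compare against here. Your write-up would serve perfectly well as the omitted proof, and the route you take (CRT bijection $\Z/mn\Z\cong\Z/m\Z\times\Z/n\Z$, splitting each congruence $g_i(x)\equiv 0\pmod{m_in_i}$ into its $m_i$- and $n_i$-parts, and checking that the coprimality side conditions split the same way) is exactly what one expects and presumably what the cited reference does.
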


We also need the following lemmas.

\begin{lemma} {\rm (see e.g., \cite[Th.\ 10.4]{Ore1988})} \label{lemma_cong}
For every $d_1,\ldots,d_r\in \N$ and $a_1,\ldots,a_r \in \Z$ the
simultaneous congruences
\begin{equation*}
x \equiv a_1 \ (\text{mod $d_1$}), ..., x \equiv a_r \ (\text{mod
$d_r$})
\end{equation*}
admit a solution in $x$ if and only if $\gcd(d_i,d_j)\mid a_i-a_j$
($1\le i,j\le r$), and in this case there is a unique solution $x$
(mod $\lcm(d_1,\ldots,d_r)$).
\end{lemma}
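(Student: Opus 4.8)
The plan is to split the statement into its three assertions: necessity of the compatibility conditions and uniqueness of the solution are immediate, while existence is obtained by induction on $r$, each step reducing to the classical two-modulus Chinese Remainder Theorem. For necessity, suppose $x$ solves the system; then for any $i,j$ we have $d_i\mid x-a_i$ and $d_j\mid x-a_j$, so $\gcd(d_i,d_j)$ divides both and hence divides $(x-a_i)-(x-a_j)=a_j-a_i$, which is the stated condition. For uniqueness, if $x$ and $x'$ are both solutions then $d_i\mid x-x'$ for every $i$, whence $\lcm(d_1,\ldots,d_r)\mid x-x'$; conversely any integer congruent to a solution modulo $\lcm(d_1,\ldots,d_r)$ is itself a solution. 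So once existence is known, the solution is unique modulo $\lcm(d_1,\ldots,d_r)$.

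Next I would settle the case $r=2$. Put $g=\gcd(d_1,d_2)$ and assume $g\mid a_1-a_2$. By B\'ezout, write $g=ud_1+vd_2$ with $u,v\in\Z$, and set $x:=a_1+ud_1(a_2-a_1)/g$, an integer because $g\mid a_2-a_1$. Plainly $x\equiv a_1\pmod{d_1}$, and since $ud_1=g-vd_2\equiv g\pmod{d_2}$ we get $x\equiv a_1+(a_2-a_1)=a_2\pmod{d_2}$; thus $x$ is a solution, necessarily unique modulo $\lcm(d_1,d_2)$ by the previous paragraph.

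For general $r$, assume the assertion for $r-1$ congruences. Given the system for moduli $d_1,\ldots,d_r$ with all pairwise compatibility conditions, the induction hypothesis produces a solution $x_0$ of the first $r-1$ congruences, whose full solution set is the residue class of $x_0$ modulo $D:=\lcm(d_1,\ldots,d_{r-1})$. It remains to combine $x\equiv x_0\pmod D$ with $x\equiv a_r\pmod{d_r}$ via the case $r=2$, for which one must verify $\gcd(D,d_r)\mid x_0-a_r$. Here I use the distributive identity $\gcd\bigl(\lcm(d_1,\ldots,d_{r-1}),d_r\bigr)=\lcm\bigl(\gcd(d_1,d_r),\ldots,\gcd(d_{r-1},d_r)\bigr)$, checked prime by prime; since an $\lcm$ divides an integer exactly when each of its arguments does, it suffices to show $\gcd(d_i,d_r)\mid x_0-a_r$ for each $i\le r-1$. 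But $\gcd(d_i,d_r)\mid d_i\mid x_0-a_i$ as $x_0\equiv a_i\pmod{d_i}$, and $\gcd(d_i,d_r)\mid a_i-a_r$ by hypothesis, so adding gives $\gcd(d_i,d_r)\mid x_0-a_r$. Hence the two-modulus case applies and yields a solution, unique modulo $\lcm(D,d_r)=\lcm(d_1,\ldots,d_r)$, completing the induction.

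I expect the only delicate point to be this inductive step, namely confirming that the compatibility condition $\gcd(D,d_r)\mid x_0-a_r$ persists after collapsing the first $r-1$ congruences; everything else — necessity, uniqueness, and the explicit B\'ezout construction for two moduli — is routine.
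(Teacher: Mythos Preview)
Your argument is correct in all three parts: necessity and uniqueness are immediate, the B\'ezout construction for $r=2$ is carried out cleanly, and the inductive step is handled properly via the distributive law $\gcd(\lcm(d_1,\ldots,d_{r-1}),d_r)=\lcm(\gcd(d_1,d_r),\ldots,\gcd(d_{r-1},d_r))$, which indeed follows at once from the prime-by-prime verification since $\min$ distributes over $\max$ in a chain. There is nothing to compare against here: the paper does not supply its own proof of this lemma but simply quotes it as a classical fact from Ore's textbook, so your write-up stands on its own as a complete and standard proof of the generalized Chinese Remainder Theorem.
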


\begin{lemma} {\rm (\cite[Th.\ 5.32]{Apo1976})} \label{lemma_phi}
Let $n,d,x\in \N$ such that $d\mid n$, $1\le x\le d$,
$\gcd(x,d)=1$. Then
\begin{equation*}
\# \{k\in \N: 1\le k \le n, k\equiv x \ \text{\rm (mod $d$)},
\gcd(k,n)=1 \}=\phi(n)/\phi(d).
\end{equation*}
\end{lemma}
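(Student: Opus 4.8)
The plan is to reinterpret the set to be counted in group-theoretic terms. Put $q=n/d$; since $d\mid n$, the integers $k$ with $1\le k\le n$ and $k\equiv x\pmod d$ are exactly $x,x+d,\ldots,x+(q-1)d$, and these form a complete set of representatives, among the residues modulo $n$, of the class $x\bmod d$. Imposing $\gcd(k,n)=1$ therefore amounts to counting the preimages of $\bar x\in(\Z/d\Z)^{\times}$ under the canonical reduction homomorphism $\pi\colon(\Z/n\Z)^{\times}\to(\Z/d\Z)^{\times}$; this map is well defined since $d\mid n$ forces $\gcd(k,d)=1$ whenever $\gcd(k,n)=1$, and it is clearly a homomorphism of (finite, abelian) groups.

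The key step is to show that $\pi$ is surjective. Using the Chinese Remainder Theorem (Lemma \ref{lemma_cong}), both $(\Z/n\Z)^{\times}$ and $(\Z/d\Z)^{\times}$ decompose as direct products indexed by the primes dividing $n$, compatibly with $\pi$, so it is enough to treat $n=p^{b}$, $d=p^{a}$ with $0\le a\le b$. For $a=0$ there is nothing to do; for $a\ge 1$, any residue $y$ modulo $p^{a}$ with $p\nmid y$ has a lift $z$ modulo $p^{b}$ with $z\equiv y\pmod{p^{a}}$, and automatically $p\nmid z$, so $z\in(\Z/p^{b}\Z)^{\times}$ and $\pi(z)=y$. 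Hence $\pi$ is onto in general, and I expect this surjectivity to be the only point requiring genuine care.

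Once $\pi$ is known to be a surjective homomorphism of finite abelian groups, every fibre is a coset of $\ker\pi$ and so has cardinality $|\ker\pi|=|(\Z/n\Z)^{\times}|/|(\Z/d\Z)^{\times}|=\phi(n)/\phi(d)$; in particular the fibre over $\bar x$, which is precisely the set in the statement, has $\phi(n)/\phi(d)$ elements. Alternatively, one can avoid the surjectivity argument by a direct count: write $n=d'n_{1}$ where $d'$ is the largest divisor of $n$ built from the primes dividing $d$ and $\gcd(d',n_{1})=1$ (so $d\mid d'$ and $\gcd(d,n_{1})=1$); one checks that $\gcd(x+jd,d')=1$ always, hence $\gcd(x+jd,n)=1\iff\gcd(x+jd,n_{1})=1$, lets $j$ run over the $q=(d'/d)n_{1}$ consecutive values $0,\ldots,q-1$ to obtain $(d'/d)\phi(n_{1})$ admissible $j$, and concludes via $\phi(n)=\phi(d')\phi(n_{1})$ together with $\phi(d')/d'=\phi(d)/d$.
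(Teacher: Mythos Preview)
Your argument is correct. The paper itself gives no proof of this lemma; it merely quotes the statement from Apostol \cite[Th.\ 5.32]{Apo1976}, so there is nothing to compare against directly.

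Both of your routes are sound. The group-theoretic one is clean: once the reduction map $\pi\colon(\Z/n\Z)^{\times}\to(\Z/d\Z)^{\times}$ is known to be a surjective homomorphism, every fibre has size $\phi(n)/\phi(d)$, and your prime-power reduction via CRT is the standard way to check surjectivity. Your alternative elementary count is essentially the argument Apostol gives: split off the part $d'$ of $n$ supported on the primes of $d$, observe that $\gcd(x+jd,d')=1$ automatically because $x+jd\equiv x\pmod{p}$ for each $p\mid d$, and then count residues modulo the coprime cofactor $n_1$. The only place to be slightly careful is the line ``$\phi(d')/d'=\phi(d)/d$'': this uses that $d$ and $d'$ have exactly the same set of prime divisors, which holds because $d\mid d'$ and $d'$ is, by construction, a product of primes dividing $d$. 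With that noted, both proofs are complete.
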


\begin{lemma} \label{Brauer_Rademach} {\rm (Brauer-Rademacher identity, cf.
\cite[Ch.\ 2]{McC1986})} For every $n,k\in \N$,
\begin{equation} \label{Brauer_Rademacher}
\sum_{d\mid n, \gcd(d,k)=1}
\frac{d\mu(n/d)}{\phi(d)}=\frac{\mu(n)c_n(k)}{\phi(n)}.
\end{equation}
\end{lemma}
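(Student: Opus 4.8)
The plan is to prove the Brauer--Rademacher identity by Möbius-style manipulation starting from the standard representation \eqref{Ramanujan_repr}. First I would rewrite the right-hand side: using $c_n(k)=\sum_{d\mid\gcd(k,n)}d\mu(n/d)$ we have
\begin{equation*}
\frac{\mu(n)c_n(k)}{\phi(n)}=\frac{\mu(n)}{\phi(n)}\sum_{d\mid\gcd(n,k)}d\mu(n/d).
\end{equation*}
So the goal becomes the purely arithmetic identity
\begin{equation*}
\sum_{\substack{d\mid n\\ \gcd(d,k)=1}}\frac{d\mu(n/d)}{\phi(d)}
=\frac{\mu(n)}{\phi(n)}\sum_{d\mid\gcd(n,k)}d\mu(n/d).
\end{equation*}
The cleanest route is to observe that both sides are multiplicative in $n$ (for fixed $k$): the left side is a Dirichlet convolution of the multiplicative function $d\mapsto \tfrac{d}{\phi(d)}[\gcd(d,k)=1]$ with $\mu$, hence multiplicative; the right side is $\mu(n)/\phi(n)$ times $c_n(k)$, and $n\mapsto c_n(k)$ is multiplicative in $n$ by \eqref{Ramanujan_repr}, while $\mu(n)/\phi(n)$ is multiplicative, so the product is too (using that $\mu(n)/\phi(n)$ vanishes exactly where it must). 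Thus it suffices to check the identity at prime powers $n=p^a$.

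Next I would carry out the prime-power verification, splitting into the cases $p\mid k$ and $p\nmid k$. If $p\nmid k$, the left side runs over all $d\mid p^a$ and telescopes to $\tfrac{p^a}{\phi(p^a)}-\tfrac{p^{a-1}}{\phi(p^{a-1})}$ for $a\ge 2$ (which one computes directly), equals $p/(p-1)-1=1/(p-1)$ for $a=1$, and equals $1$ for $a=0$; meanwhile the right side is $\tfrac{\mu(p^a)}{\phi(p^a)}c_{p^a}(k)$ with $c_{p^a}(k)=\mu(p^a)$ when $p\nmid k$, so it is $\mu(p^a)^2/\phi(p^a)$, which matches (both vanish for $a\ge 2$; for $a=1$ both give $1/(p-1)$; for $a=0$ both give $1$). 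If $p\mid k$, on the left only $d=1$ survives the condition $\gcd(d,k)=1$ among those $d\mid p^a$ — wait, more carefully, $d\mid p^a$ with $\gcd(d,k)=1$ forces $d=1$, contributing $\mu(p^a)$, which is $0$ for $a\ge 2$, $-1$ for $a=1$, $1$ for $a=0$; on the right, $c_{p^a}(k)$ for $p\mid k$ equals $\phi(p^a)$ if $a\le v_p(k)+\ldots$ more precisely $c_{p^a}(k)=p^{a-1}(p-1)$ if $p^a\mid k$ adjusted — here since $a=1$ is the only nonzero $\mu$ case, $c_p(k)=p-1$ when $p\mid k$, giving $\tfrac{-1}{p-1}(p-1)=-1$, matching; and the $a\ge2$ and $a=0$ cases are immediate.

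The main obstacle, such as it is, is bookkeeping rather than depth: one must be careful that the multiplicativity argument for the left-hand side as a function of $n$ genuinely goes through (the summand $d\mapsto \tfrac{d}{\phi(d)}[\gcd(d,k)=1]$ is multiplicative in $d$ for fixed $k$ because both $d/\phi(d)$ and the indicator of $\gcd(d,k)=1$ are, and $\mu$ is multiplicative, so the convolution is), and that the prime-power case analysis handles the boundary exponents $a=0,1$ separately since the telescoping sum behaves differently there. An alternative, essentially equivalent, approach avoiding multiplicativity is to expand $\tfrac{d}{\phi(d)}=\sum_{e\mid d}\tfrac{\mu^2(e)}{\phi(e)}$ (valid since $\mu^2/\phi$ is the function whose Dirichlet series sums to $n/\phi(n)$ over squarefree kernels), substitute, and interchange the order of summation, reducing the left side to $\sum_{e\mid n,\,\gcd(e,k)=1}\tfrac{\mu^2(e)}{\phi(e)}\sum_{e\mid d\mid n}\mu(n/d)$, where the inner sum vanishes unless $e=n$; this again pins the left side to $\tfrac{\mu^2(n)}{\phi(n)}[\gcd(n,k)=1]$, after which one separately checks this coincides with $\tfrac{\mu(n)c_n(k)}{\phi(n)}$ by the prime-power evaluation of $c_n(k)$. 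I would present the multiplicativity proof as the primary one, since it is shortest and fits the paper's recurring theme of reducing identities to prime powers.
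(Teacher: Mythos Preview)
The paper does not prove this lemma at all: it is quoted with a reference to McCarthy \cite[Ch.\ 2]{McC1986} and used as a black box in the proof of Corollary~\ref{cor_8}'s consequence \eqref{pairw_rel_prime}. So there is no ``paper's proof'' to compare against.

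Your primary argument --- multiplicativity of both sides in $n$ for fixed $k$, followed by a direct prime-power check --- is correct and complete. The case split $p\nmid k$ versus $p\mid k$ at $n=p^a$ is handled accurately, including the boundary exponents $a=0,1$.

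Your secondary ``alternative'' sketch, however, contains a genuine error. After substituting $d/\phi(d)=\sum_{e\mid d}\mu^2(e)/\phi(e)$ and interchanging the order of summation, the inner sum is
\[
\sum_{\substack{d:\, e\mid d\mid n\\ \gcd(d,k)=1}}\mu(n/d),
\]
and the coprimality constraint on $d$ does \emph{not} disappear; it is not the unconstrained sum $\sum_{e\mid d\mid n}\mu(n/d)=[e=n]$. Consequently the claimed endpoint $\tfrac{\mu^2(n)}{\phi(n)}[\gcd(n,k)=1]$ is false: for $n=p$ with $p\mid k$ the left-hand side of \eqref{Brauer_Rademacher} equals $-1$, whereas $\tfrac{\mu^2(p)}{\phi(p)}[\gcd(p,k)=1]=0$. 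Since you already plan to present the multiplicativity proof as primary, simply drop the alternative paragraph or repair it (one correct fix is to keep the gcd condition, write $d=ef$, and recognise the resulting inner sum as another instance of the same identity for the modulus $n/e$, leading to an inductive rather than one-line argument).
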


Finally we present some concepts and results concerning $s$-even
functions, to be applied in Section \ref{section_T} involving the
function $T_a$. Usually the term $r$-even function, or even function
(mod $r$) is used in the literature, see \cite{Coh1959,McC1986,
SchSpi1994,Siv1989,TotHau2010}, but we replace here $r$ by $s$.

A function $f\in {\cal F}_1$ is said to be an $s$-even function if
$f(\gcd(n,s))=f(n)$ for all $n\in \N$, where $s\in \N$ is fixed.
Then $f$ is $s$-periodic, i.e., $f(n+s)=f(n)$ for every $n\in \N$
and this periodicity extends $f$ to a function defined on $\Z$.

If $f$ is $s$-even, then it has a (Ramanujan-)Fourier expansion of
the form
\begin{equation*}
f(n)= \sum_{d\mid s} \alpha_f(d) c_d(n) \qquad (n\in \N),
\end{equation*}
where the (Ramanujan-)Fourier coefficients $\alpha_f(d)$ are
uniquely determined and given by
\begin{equation*}
\alpha_f(d)= \frac1{s} \sum_{e\mid s} f(e)c_{s/e}(s/d).
\end{equation*}

The Cauchy convolution of the $s$-even functions $f$ and $g$ is
given by
\begin{equation} \label{Cauchy_convo}
(f\otimes g)(n):= \sum_{ k \text{ (mod $s$)}} f(k)g(n-k)  \qquad
(n\in \N),
\end{equation}
$f\otimes g$ is again $s$-even and $\alpha_{f\otimes g}(d)=s
\alpha_f(d) \alpha_g(d)$ for every $d\mid s$.

\begin{lemma} {\rm (\cite[Th.\ 6]{Coh1959})} \label{lemma_4}
If $f$ is $s$-even, then for every $a\in \Z$,
\begin{equation} \label{Cohen_id_general}
\sum_{\substack{k=1\\ \gcd(k,s)=1}}^s f(a-k)= \phi(s) \sum_{d\mid s}
\frac{\alpha_f(d) \mu(d)}{\phi(d)} c_d(a),
\end{equation}
\end{lemma}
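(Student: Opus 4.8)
The plan is to start from the Ramanujan--Fourier expansion of $f$. Since $f$ is $s$-even we may write $f(n)=\sum_{d\mid s}\alpha_f(d)c_d(n)$, and because each $c_d$ with $d\mid s$ is $s$-periodic this identity extends to all $n\in\Z$; in particular it may be applied with $n=a-k$. Substituting and interchanging the two finite sums gives
\begin{equation*}
\sum_{\substack{k=1\\ \gcd(k,s)=1}}^s f(a-k)=\sum_{d\mid s}\alpha_f(d)\,S_d(a),\qquad S_d(a):=\sum_{\substack{k=1\\ \gcd(k,s)=1}}^s c_d(a-k).
\end{equation*}
So the whole problem reduces to evaluating $S_d(a)$ for a fixed divisor $d$ of $s$ and then matching the outcome against the claimed right-hand side.

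To compute $S_d(a)$ I would use the divisor representation $c_d(a-k)=\sum_{e\mid d}e\,\mu(d/e)\,[\,e\mid a-k\,]$ coming from \eqref{Ramanujan_repr}, swap the order of summation, and group the $k$'s according to their residue modulo $e$. For a fixed $e\mid d$ the inner count is $\#\{k:1\le k\le s,\ \gcd(k,s)=1,\ k\equiv a\ \pmod{e}\}$. Since $e\mid d\mid s$, a $k$ with $\gcd(k,s)=1$ automatically satisfies $\gcd(k,e)=1$, so this count is $0$ unless $\gcd(a,e)=1$, and when $\gcd(a,e)=1$ Lemma \ref{lemma_phi} (applied with $n=s$, with the lemma's modulus equal to $e$, and $x$ the residue of $a$ modulo $e$) gives exactly $\phi(s)/\phi(e)$ such $k$. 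Hence
\begin{equation*}
S_d(a)=\phi(s)\sum_{\substack{e\mid d\\ \gcd(e,a)=1}}\frac{e\,\mu(d/e)}{\phi(e)}.
\end{equation*}
The remaining sum is precisely the left-hand side of the Brauer--Rademacher identity \eqref{Brauer_Rademacher} of Lemma \ref{Brauer_Rademach} with $n=d$ and $k=a$, so $S_d(a)=\phi(s)\mu(d)c_d(a)/\phi(d)$. Plugging this back into the displayed expansion yields $\sum_{k}f(a-k)=\phi(s)\sum_{d\mid s}\frac{\alpha_f(d)\mu(d)}{\phi(d)}c_d(a)$, which is \eqref{Cohen_id_general}.

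An alternative, essentially equivalent route avoids the Fourier-inversion bookkeeping: the sum in question is the value at $a$ of the Cauchy convolution $\varrho\otimes f$, where $\varrho(k)=[\gcd(k,s)=1]$ is the $s$-even indicator of coprimality. Using $\alpha_{\varrho\otimes f}(d)=s\,\alpha_{\varrho}(d)\alpha_f(d)$ together with the known coefficients $\alpha_{\varrho}(d)=\tfrac{\phi(s)}{s}\cdot\tfrac{\mu(d)}{\phi(d)}$ (which is again just the Brauer--Rademacher identity in disguise) gives the result immediately. I expect the only genuinely delicate point in either approach to be the gcd bookkeeping around Lemma \ref{lemma_phi} — namely justifying that the residue class $a\ \pmod{e}$ contributes nothing when $\gcd(a,e)>1$ and contributes $\phi(s)/\phi(e)$ otherwise — everything else being a routine interchange of sums and an appeal to the quoted identities.
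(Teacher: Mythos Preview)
The paper does not give its own proof of Lemma~\ref{lemma_4}; the result is merely quoted from Cohen \cite[Th.~6]{Coh1959}. Your argument is correct and self-contained within the paper's toolkit: after expanding $f$ in its Ramanujan--Fourier series you reduce to evaluating $S_d(a)$, and the combination of Lemma~\ref{lemma_phi} with Lemma~\ref{Brauer_Rademach} gives precisely $S_d(a)=\phi(s)\mu(d)c_d(a)/\phi(d)$, which is what is needed.

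Two minor remarks. First, Lemma~\ref{Brauer_Rademach} is stated for $k\in\N$, whereas you apply it with $k=a\in\Z$; this is harmless since both sides of \eqref{Brauer_Rademacher} depend only on $\gcd(n,k)$ and hence extend to $k\in\Z$ (with the convention $\gcd(n,0)=n$), but it is worth saying explicitly. Second, your alternative route via the Cauchy convolution $\varrho\otimes f$ with $\varrho(k)=[\gcd(k,s)=1]$ is in fact closer to Cohen's original derivation and to the set-up of Section~\ref{section_2}; the coefficient computation $\alpha_\varrho(d)=\tfrac{1}{s}c_s(s/d)=\tfrac{\phi(s)}{s}\cdot\tfrac{\mu(d)}{\phi(d)}$ follows from the standard evaluation $c_s(s/d)=\mu(d)\phi(s)/\phi(d)$, so this route needs no more than what is already collected in the preliminaries.
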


For $f(k)=c_s(k)$ ($k\in \N$), which is $s$-even, formula
\eqref{Cohen_id_general} reduces to \eqref{Cohen_id}, with $n$
replaced by $s$.  See also \cite[Ch.\ 2]{McC1986}.

\section{The function $E_G$}

Here we consider the function $E_G$ defined by \eqref{def_func_E_G}.

\begin{theorem} \label{theorem_E} If $G$ is an arbitrary system of polynomials with
integer coefficients, then for any $m_1,\ldots,m_r\in \N$,
\begin{equation} \label{repr_E}
E_G(m_1,\ldots,m_r) =  \sum_{d_1\mid m_1,\ldots,d_r\mid m_r}
\frac{d_1\mu(m_1/d_1)\cdots d_r\mu(m_r/d_r)} {\lcm(d_1,\ldots,d_r)}
N_G(d_1,\ldots,d_r).
\end{equation}
\end{theorem}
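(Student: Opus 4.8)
The plan is to substitute the Ramanujan-sum representation \eqref{Ramanujan_repr} into the definition \eqref{def_func_E_G} and interchange the order of summation. Writing $c_{m_i}(g_i(k)) = \sum_{d_i \mid m_i} d_i\mu(m_i/d_i) [d_i \mid g_i(k)]$, where $[\,\cdot\,]$ is the Iverson bracket, the product $c_{m_1}(g_1(k))\cdots c_{m_r}(g_r(k))$ becomes
\[
\sum_{d_1\mid m_1,\ldots,d_r\mid m_r} d_1\mu(m_1/d_1)\cdots d_r\mu(m_r/d_r)\, [d_1\mid g_1(k)]\cdots[d_r\mid g_r(k)].
\]
Plugging this into \eqref{def_func_E_G} and swapping the $k$-sum with the $d_i$-sums, the inner sum becomes $\frac1m \sum_{k=1}^m [d_1\mid g_1(k), \ldots, d_r\mid g_r(k)]$, i.e.\ $\frac1m$ times the number of $k$ in $[1,m]$ satisfying the simultaneous congruences $g_i(k)\equiv 0 \pmod{d_i}$.

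The key step is then to identify that inner count as $\frac{1}{\lcm(d_1,\ldots,d_r)} N_G(d_1,\ldots,d_r)$. Here the divisibility conditions $d_1\mid g_1(k),\ldots,d_r\mid g_r(k)$ depend on $k$ only modulo $L:=\lcm(d_1,\ldots,d_r)$, and by definition $N_G(d_1,\ldots,d_r)$ counts the solutions modulo $L$. Since each $d_i\mid m_i$ we have $L\mid m$, so as $k$ ranges over a complete residue system mod $m$ each residue class mod $L$ is hit exactly $m/L$ times; hence the count of $k\in[1,m]$ satisfying the congruences equals $(m/L)\,N_G(d_1,\ldots,d_r)$, and dividing by $m$ gives $N_G(d_1,\ldots,d_r)/L$. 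Substituting back yields exactly \eqref{repr_E}.

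I do not expect a serious obstacle here; the argument is a routine Möbius-expansion-and-interchange computation. The one point requiring a little care is the periodicity observation — that the solution count of \eqref{sim_cong_m} with moduli $d_i$ genuinely is a function of $k \bmod L$ and that $N_G$ is defined precisely as the number of such residues mod $L$ — together with the elementary fact that $L \mid m$ so that the count scales by $m/L$. Once that is in place the formula drops out immediately, and it is worth noting that no multiplicativity of $N_G$ (Lemma \ref{lemma_sim_cong}) is needed for this theorem itself; that lemma would instead be used afterward to deduce the multiplicativity of $E_G$ as a function of several variables via the convolution structure visible in \eqref{repr_E}.
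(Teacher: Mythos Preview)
Your argument is correct and is essentially identical to the paper's own proof: substitute \eqref{Ramanujan_repr} into \eqref{def_func_E_G}, interchange the $k$-sum with the $d_i$-sums, and observe that the inner count equals $(m/\lcm(d_1,\ldots,d_r))\,N_G(d_1,\ldots,d_r)$. Your added remarks on why $L\mid m$ and why the count is $L$-periodic make explicit exactly the step the paper records in one line, and your observation that Lemma~\ref{lemma_sim_cong} is not needed here but only for the subsequent multiplicativity corollary is also in line with how the paper uses it.
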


\begin{proof} Using formula \eqref{Ramanujan_repr} we obtain
\begin{equation*}
E_G(m_1,\ldots,m_r)= \frac1{m} \sum_{k=1}^m \sum_{d_1\mid
\gcd(g_1(k),m_1)} d_1 \mu(m_1/d_1) \ldots \sum_{d_r\mid
\gcd(g_r(k),m_r)} d_r \mu(m_r/d_r)
\end{equation*}
\begin{equation*}
= \frac1{m} \sum_{d_1\mid m_1,\ldots,d_r\mid m_r} d_1\mu(m_1/d_1)
\cdots d_r \mu(m_r/d_r) \sum_{\substack{1\le k\le m
\\ g_1(k)\equiv 0 \text{ (mod $d_1$)},\ldots, g_r(k)\equiv 0 \text{ (mod $d_r$)}}}
1,
\end{equation*}
where the inner sum is $(m/\lcm(d_1,\ldots,d_r))
N_G(d_1,\ldots,d_r)$.
\end{proof}

\begin{cor} If $G$ is any system of
polynomials with integer coefficients, then the function
$(m_1,\ldots,m_r) \mapsto E_G(m_1,\ldots,m_r)$ is integer valued and
multiplicative.
\end{cor}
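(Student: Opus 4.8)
The plan is to deduce both assertions directly from the explicit formula \eqref{repr_E} of Theorem~\ref{theorem_E}, which expresses $E_G$ as a finite sum involving only $\mu$, $\lcm$, and $N_G$. For the integrality claim, I would invoke the original definition \eqref{def_func_E_G}: since each Ramanujan sum $c_{m_i}(g_i(k))$ is an integer by \eqref{Ramanujan_repr}, the sum $\sum_{k=1}^m c_{m_1}(g_1(k))\cdots c_{m_r}(g_r(k))$ is an integer, so it remains only to check that $m=\lcm(m_1,\ldots,m_r)$ divides it. This divisibility is most cleanly seen from \eqref{repr_E}: in the inner counting argument of the proof of Theorem~\ref{theorem_E}, the number of $k$ with $1\le k\le m$ satisfying the congruences is exactly $(m/\lcm(d_1,\ldots,d_r))\,N_G(d_1,\ldots,d_r)$, and since $\lcm(d_1,\ldots,d_r)\mid m$ whenever each $d_i\mid m_i$, every summand after clearing the factor $1/m$ is an integer. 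Hence $E_G(m_1,\ldots,m_r)\in\Z$.

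For multiplicativity, I would argue that the right-hand side of \eqref{repr_E} is a convolution, in the sense of \eqref{convo_functions}, of multiplicative functions of $r$ variables, and then quote the fact from Section~\ref{section_2} that the Cauchy-type convolution \eqref{convo_functions} preserves multiplicativity. Concretely, write
\begin{equation*}
E_G(m_1,\ldots,m_r)=\sum_{d_1\mid m_1,\ldots,d_r\mid m_r} F(d_1,\ldots,d_r)\,\mu(m_1/d_1)\cdots\mu(m_r/d_r),
\end{equation*}
where $F(d_1,\ldots,d_r):=\dfrac{d_1\cdots d_r}{\lcm(d_1,\ldots,d_r)}\,N_G(d_1,\ldots,d_r)$. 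The map $(d_1,\ldots,d_r)\mapsto d_1\cdots d_r$ is multiplicative (it is $h(d_1)\cdots h(d_r)$ with $h=\mathrm{id}$), the map $(d_1,\ldots,d_r)\mapsto 1/\lcm(d_1,\ldots,d_r)$ is multiplicative as noted in Section~\ref{section_2}, and $(m_1,\ldots,m_r)\mapsto N_G(m_1,\ldots,m_r)$ is multiplicative by Lemma~\ref{lemma_sim_cong}; a product of multiplicative functions of several variables is again multiplicative, so $F$ is multiplicative. Likewise $(m_1,\ldots,m_r)\mapsto\mu(m_1)\cdots\mu(m_r)$ is multiplicative. Therefore $E_G=F*(\mu\otimes\cdots\otimes\mu)$ is a convolution of two multiplicative functions of $r$ variables, hence multiplicative.

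The only genuinely delicate point, and the one I would be most careful about, is that multiplicativity of a function of several variables requires it to be \emph{nonzero} (equivalently $E_G(1,\ldots,1)=1$); I would verify this at the end by computing $E_G(1,\ldots,1)=\frac{1}{1}\sum_{k=1}^{1}c_1(g_1(k))\cdots c_1(g_r(k))=1$, using $c_1\equiv 1$. A secondary subtlety is making sure the convolution identity really matches the shape of \eqref{convo_functions}: here the ``second factor'' evaluated at $(m_1/d_1,\ldots,m_r/d_r)$ is $\mu(m_1/d_1)\cdots\mu(m_r/d_r)$, which is indeed a function of $(m_1/d_1,\ldots,m_r/d_r)$, so the structure is exactly that of \eqref{convo_functions}; no reindexing obstruction arises. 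With these two checks in place the corollary follows immediately from Theorem~\ref{theorem_E} and the preservation properties recorded in Section~\ref{section_2}.
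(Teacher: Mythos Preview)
Your approach is the same as the paper's: multiplicativity via Theorem~\ref{theorem_E} and Lemma~\ref{lemma_sim_cong}, exhibiting $E_G$ as a convolution \eqref{convo_functions} of multiplicative functions of $r$ variables; integrality because each summand of \eqref{repr_E} is an integer. Your extra bookkeeping (checking $E_G(1,\ldots,1)=1$, and that the sum really has the shape \eqref{convo_functions}) is fine and more explicit than the paper.

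There is, however, one slip in the integrality step. You write that ``since $\lcm(d_1,\ldots,d_r)\mid m$ whenever each $d_i\mid m_i$, every summand after clearing the factor $1/m$ is an integer.'' The divisibility $\lcm(d_1,\ldots,d_r)\mid m$ only tells you that the inner count $(m/\lcm(d_1,\ldots,d_r))\,N_G(d_1,\ldots,d_r)$ is an integer, which you already knew since it is a cardinality; it does \emph{not} by itself show that the summand
\[
\frac{d_1\cdots d_r}{\lcm(d_1,\ldots,d_r)}\,\mu(m_1/d_1)\cdots\mu(m_r/d_r)\,N_G(d_1,\ldots,d_r)
\]
of \eqref{repr_E} is an integer. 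What you actually need here is $\lcm(d_1,\ldots,d_r)\mid d_1\cdots d_r$, which is immediate from comparing $p$-adic valuations: $\max_i v_p(d_i)\le \sum_i v_p(d_i)$ for every prime $p$. With that correction the argument goes through exactly as in the paper.
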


\begin{proof} By Theorem \ref{theorem_E} and Lemma \ref{lemma_sim_cong}
the function $E_G$ is the convolution of multiplicative functions,
hence it is multiplicative. It is integer valued since each term of
the sum in \eqref{repr_E} is an integer.
\end{proof}

\begin{cor} \label{cor_2} For every $m_1,\ldots,m_r\in \N$ and
$\a=(a_1,\ldots,a_r)\in \Z^r$,
\begin{equation*}
E_{(\a)}(m_1,\ldots,m_r):= \frac1{m} \sum_{k=1}^m
c_{m_1}(k-a_1)\cdots c_{m_r}(k-a_r)
\end{equation*}
\begin{equation*}
=\sum_{d_1\mid m_1,\ldots,d_r\mid m_r} \frac{d_1\mu(m_1/d_1)\cdots
d_r\mu(m_r/d_r)} {\lcm(d_1,\ldots,d_r)}N_{(\a)}(d_1,\ldots,d_r),
\end{equation*}
where
\begin{equation*}
N_{(\a)}(d_1,\ldots,d_r)=
\begin{cases}
1, & \text{if} \  \gcd(d_i,d_j)\mid a_i-a_j \ (1\le i,j\le r),\\ 0,
& \text{otherwise}.
\end{cases}
\end{equation*}
\end{cor}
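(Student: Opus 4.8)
The plan is to obtain this corollary as the special case of Theorem~\ref{theorem_E} in which $G=(\a)$ stands for the system of linear polynomials $g_i(x)=x-a_i$ ($1\le i\le r$). First I would substitute these polynomials into \eqref{def_func_E_G}: since $g_i(k)=k-a_i$, the left-hand side of \eqref{repr_E} becomes precisely $E_{(\a)}(m_1,\ldots,m_r)=\frac1m\sum_{k=1}^m c_{m_1}(k-a_1)\cdots c_{m_r}(k-a_r)$, and the right-hand side becomes the stated divisor sum with $N_G$ replaced by $N_{(\a)}$. So the only remaining point is to establish the closed form for $N_{(\a)}(d_1,\ldots,d_r)$.

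Recall that $N_G(d_1,\ldots,d_r)$ counts the solutions $x$ modulo $\lcm(d_1,\ldots,d_r)$ of $g_1(x)\equiv 0\pmod{d_1},\ldots,g_r(x)\equiv 0\pmod{d_r}$. For $g_i(x)=x-a_i$ this system is exactly $x\equiv a_1\pmod{d_1},\ldots,x\equiv a_r\pmod{d_r}$. By Lemma~\ref{lemma_cong} such a system is solvable if and only if $\gcd(d_i,d_j)\mid a_i-a_j$ for all $1\le i,j\le r$, and when it is solvable the solution is unique modulo $\lcm(d_1,\ldots,d_r)$. Hence $N_{(\a)}(d_1,\ldots,d_r)=1$ under the compatibility conditions and $=0$ otherwise, which is precisely the claimed formula.

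There is essentially no obstacle here---the statement is a direct specialization, with the substance carried entirely by Lemma~\ref{lemma_cong}. The only point requiring a little care is that the divisors $d_i$ in the sum still range over \emph{all} divisors of $m_i$, not merely those meeting the compatibility conditions; interpreting $N_{(\a)}$ as the indicator of those conditions makes \eqref{repr_E} valid term by term, the incompatible terms simply vanishing. One may also remark that, combined with the preceding corollary, this recovers the multiplicativity and integrality of $E_{(\a)}$, and that taking $a_1=\cdots=a_r$ gives back the function $E$ of \eqref{def_func_E}.
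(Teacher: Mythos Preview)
Your proposal is correct and follows exactly the paper's approach: specialize Theorem~\ref{theorem_E} to $g_i(x)=x-a_i$ and then evaluate $N_G$ via Lemma~\ref{lemma_cong}. The paper's own proof is the one-line ``Apply Theorem~\ref{theorem_E} in the case $g_1(x)=x-a_1,\ldots,g_r(x)=x-a_r$ and Lemma~\ref{lemma_cong},'' of which your write-up is simply a careful unpacking.
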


\begin{proof} Apply Theorem \ref{theorem_E} in the case $g_1(x)=x-a_1,\ldots g_r(x)=x-a_r$, $\a
=(a_1,\ldots,a_r)\in \Z^r$ and Lemma \ref{lemma_cong}.
\end{proof}

\begin{cor} For every $m_1,m_2\in \N$ with $m:=\lcm(m_1,m_2)$ and
every $a_1,a_2\in \Z$,
\begin{equation*}
E_{(a_1,a_2)}(m_1,m_2):= \frac1{m} \sum_{k=1}^m
c_{m_1}(k-a_1)c_{m_r}(k-a_2)
\end{equation*}
\begin{equation} \label{form_cor_3_1}
= \sum_{\substack{d_1\mid m_1,d_2\mid m_2\\
\gcd(d_1,d_2)\mid a_1-a_2}} \gcd(d_1,d_2) \mu(m_1/d_1) \mu(m_2/d_2).
\end{equation}

Furthermore, if $|a_1-a_2|=1$, then
\begin{equation} \label{E_G_case_r_2_eval}
E_{(a_1,a_2)}(m_1,m_2) = \begin{cases} (-1)^{\omega(m)}, & \text{
if}\ m_1=m_2=m \text{ is squarefree},\\ 0, & \text{ otherwise},
\end{cases}
\end{equation}
$\omega(m)$ denoting the number of distinct prime factors of $m$.
\end{cor}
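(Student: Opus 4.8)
The plan is to obtain the first identity \eqref{form_cor_3_1} as the case $r=2$ of Corollary \ref{cor_2}, and then to read off the closed form \eqref{E_G_case_r_2_eval} from it by a prime-by-prime computation. For \eqref{form_cor_3_1} I would only observe that when $r=2$ one has $\lcm(d_1,d_2)=d_1d_2/\gcd(d_1,d_2)$, so the coefficient $d_1d_2/\lcm(d_1,d_2)$ occurring in Corollary \ref{cor_2} equals $\gcd(d_1,d_2)$, while Lemma \ref{lemma_cong} shows that $N_{(a_1,a_2)}(d_1,d_2)=1$ exactly when $\gcd(d_1,d_2)\mid a_1-a_2$ and is $0$ otherwise. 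Substituting these into Corollary \ref{cor_2} gives \eqref{form_cor_3_1} immediately.

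For the second part I would set $|a_1-a_2|=1$, under which the condition $\gcd(d_1,d_2)\mid a_1-a_2$ becomes $\gcd(d_1,d_2)=1$, so \eqref{form_cor_3_1} reduces to the sum of $\mu(m_1/d_1)\mu(m_2/d_2)$ over $d_1\mid m_1$, $d_2\mid m_2$ with $\gcd(d_1,d_2)=1$. Since $E_{(a_1,a_2)}$ is multiplicative (by the corollary asserting multiplicativity of $E_G$, or directly from \eqref{form_cor_3_1} together with Lemma \ref{lemma_sim_cong}), it suffices to evaluate the local factor
\begin{equation*}
F(p^a,p^b)=\sum_{\substack{0\le i\le a,\ 0\le j\le b\\ \min(i,j)=0}}\mu(p^{a-i})\mu(p^{b-j}),
\end{equation*}
where I used $\gcd(p^i,p^j)=p^{\min(i,j)}$. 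Because $\mu$ vanishes on prime powers with exponent $\ge 2$, only indices with $a-i\in\{0,1\}$ and $b-j\in\{0,1\}$ contribute, and a short case check according to whether each of $a,b$ is $0$, $1$, or $\ge 2$ should yield $F(1,1)=1$, $F(p,p)=-1$, and $F(p^a,p^b)=0$ in all remaining cases.

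Finally, multiplying the local factors gives $E_{(a_1,a_2)}(m_1,m_2)=\prod_p F(p^{e_p(m_1)},p^{e_p(m_2)})$, which vanishes unless $e_p(m_1)=e_p(m_2)\le 1$ for every prime $p$, i.e.\ unless $m_1=m_2$ and $m_1$ is squarefree; in that case $m=m_1=m_2$ and the product equals $(-1)^{\omega(m)}$, which is \eqref{E_G_case_r_2_eval}. The only step needing attention is the enumeration of the surviving terms of $F(p^a,p^b)$---in particular checking that the mixed cases (such as $a=0<b$, or $a\ge 2$ with $b=1$) really contribute $0$---but this is a finite, routine verification rather than a real difficulty; everything else is direct substitution into results already established.
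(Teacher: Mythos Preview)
Your proposal is correct and follows essentially the same route as the paper's own proof: specialize Corollary~\ref{cor_2} to $r=2$ (using $d_1d_2/\lcm(d_1,d_2)=\gcd(d_1,d_2)$) to get \eqref{form_cor_3_1}, then for $|a_1-a_2|=1$ reduce to $\gcd(d_1,d_2)=1$, invoke multiplicativity, and compute the local values $F(p^a,p^b)$ to obtain \eqref{E_G_case_r_2_eval}. The paper's argument is identical in structure, only stating the prime-power values without writing out the case check you sketch.
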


\begin{proof} Formula \eqref{form_cor_3_1} follows from
Corollary \ref{cor_2} applied for $r=2$. Assume that $|a_1-a_2|=1$.
Then \eqref{form_cor_3_1} gives
\begin{equation*}
E_{(a_1,a_2)}(m_1,m_2)= \sum_{\substack{d_1\mid m_1,d_2\mid m_2\\
\gcd(d_1,d_2)=1}} \mu(m_1/d_1) \mu(m_2/d_2),
\end{equation*}
and we deduce that for any prime $p$ and any $u,v\in \N \cup \{0\}$,
\begin{equation*}
E_{(a_1,a_2)}(p^u,p^v)= \begin{cases} 1, & \text{ if}\ u=v=0, \\ -1, & \text{ if}\ u=v=1, \\
0, & \text{ otherwise}.
\end{cases}
\end{equation*}

Since the function $(m_1,m_2)\mapsto E_{(a_1,a_2)}(m_1,m_2)$ is
multiplicative, this leads to \eqref{E_G_case_r_2_eval}.
\end{proof}

For the function $E$ given by \eqref{def_func_E} we deduce from
Corollary \ref{cor_2} the next formula by selecting $a_1=\ldots=a_r=0$.

\begin{cor} {\rm (\cite[Prop.\ 3]{Tot2011})}  For every $m_1,\ldots,m_r\in
\N$,
\begin{equation*}
E(m_1,\ldots m_r)= \sum_{d_1\mid m_1,\ldots,d_r\mid m_r}
\frac{d_1\mu(m_1/d_1)\cdots d_r\mu(m_r/d_r)} {\lcm(d_1,\ldots,d_r)}.
\end{equation*}
\end{cor}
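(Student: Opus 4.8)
The plan is to obtain the identity as an immediate specialization of Corollary \ref{cor_2}. First I would observe that since $c_{m_i}(k)=c_{m_i}(k-0)$ for every $k$ and every $i$, the function $E(m_1,\ldots,m_r)$ defined by \eqref{def_func_E} is exactly the function $E_{(\a)}(m_1,\ldots,m_r)$ of Corollary \ref{cor_2} in the case $\a=(0,\ldots,0)\in \Z^r$.

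Next I would compute $N_{(\a)}(d_1,\ldots,d_r)$ for this choice of $\a$. The defining condition $\gcd(d_i,d_j)\mid a_i-a_j$ becomes $\gcd(d_i,d_j)\mid 0$, which holds for all $1\le i,j\le r$ and all $d_1,\ldots,d_r\in \N$. Hence $N_{(\a)}(d_1,\ldots,d_r)=1$ identically, and substituting this into the formula of Corollary \ref{cor_2} yields
\begin{equation*}
E(m_1,\ldots,m_r) = \sum_{d_1\mid m_1,\ldots,d_r\mid m_r} \frac{d_1\mu(m_1/d_1)\cdots d_r\mu(m_r/d_r)}{\lcm(d_1,\ldots,d_r)},
\end{equation*}
as claimed.

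Alternatively, the result can be read directly off Theorem \ref{theorem_E} with $g_1(x)=\cdots=g_r(x)=x$: the simultaneous congruences \eqref{sim_cong_m} then read $x\equiv 0 \pmod{d_i}$ for $1\le i\le r$, equivalently $x\equiv 0 \pmod{\lcm(d_1,\ldots,d_r)}$, which has the single solution $x\equiv 0$ modulo $\lcm(d_1,\ldots,d_r)$, so $N_G(d_1,\ldots,d_r)=1$ for every divisor tuple. There is no real obstacle here; the only point to verify is the trivial observation that the divisibility constraints defining $N_{(\a)}$ (equivalently, the congruence system) become vacuous once every $a_i$ vanishes.
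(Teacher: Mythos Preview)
Your proposal is correct and matches the paper's approach exactly: the paper simply states that the formula follows from Corollary~\ref{cor_2} upon selecting $a_1=\ldots=a_r=0$, and you have spelled out precisely why $N_{(\a)}(d_1,\ldots,d_r)=1$ identically in that case. Your alternative via Theorem~\ref{theorem_E} with $g_1(x)=\cdots=g_r(x)=x$ is equally valid and amounts to the same observation.
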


For other special choices of the polynomials $g_1,\ldots,g_r$
similar results can be derived if the values $N_G(d_1,\ldots,d_r)$
are known. We give the following simple example.

\begin{cor} \label{cor_E_G_quadratic}  For every $n\in \N$ write
$n=2^jm$ with $j\in \N \cup \{0\}$ and $m$ odd. Then
\begin{equation} \label{E_G_qudratic}
\frac1{n} \sum_{k=1}^n c_n(k^2-1) = \begin{cases} c_j\ne 0, & \text{
if $j\in \{0,2,3\}$ and $m$ is squarefree},\\ 0, & \text{
otherwise},
\end{cases}
\end{equation}
where $c_0=c_2=1$, $c_3=2$.
\end{cor}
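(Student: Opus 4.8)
The plan is to read the left-hand side of \eqref{E_G_qudratic} as the value $E_G(n)$ of the function \eqref{def_func_E_G} in the one-variable case $r=1$ with the single polynomial $g_1(x)=x^2-1$, and then exploit Theorem \ref{theorem_E} together with multiplicativity, reducing everything to a classical count of square roots of unity modulo a prime power.

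First I would specialize Theorem \ref{theorem_E} to $r=1$. Since $\lcm(d)=d$, the factor $d_1/\lcm(d_1)$ equals $1$ and the formula collapses to
\[
\frac1n\sum_{k=1}^n c_n(k^2-1)=E_G(n)=\sum_{d\mid n}\mu(n/d)\,N_G(d),
\]
where $N_G(d)$ denotes the number of residues $x$ (mod $d$) with $x^2\equiv 1$ (mod $d$). The function $n\mapsto E_G(n)$ is multiplicative (by the corollary to Theorem \ref{theorem_E}; equivalently, by Lemma \ref{lemma_sim_cong} and the fact that the convolution \eqref{convo_functions} preserves multiplicativity), so it suffices to evaluate $E_G(p^a)$ at each prime power, which by M\"obius inversion is just $E_G(p^a)=N_G(p^a)-N_G(p^{a-1})$ for $a\ge 1$, together with $E_G(1)=1$.

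Next I would record the values of $N_G$, which follow from the structure of $(\Z/p^a\Z)^\times$: for an odd prime $p$ this group is cyclic, so $x^2\equiv 1$ has exactly $2$ solutions and $N_G(p^a)=2$ for all $a\ge 1$; for $p=2$ the group is trivial, cyclic of order $2$, or isomorphic to $\Z/2\Z\times\Z/2^{a-2}\Z$ according as $a=1$, $a=2$, or $a\ge 3$, giving $N_G(2)=1$, $N_G(4)=2$, $N_G(2^a)=4$ for $a\ge 3$; and $N_G(1)=1$. Plugging these in gives, for odd $p$, $E_G(p)=2-1=1$ and $E_G(p^a)=0$ for $a\ge 2$; and for $p=2$, $E_G(2)=1-1=0$, $E_G(4)=2-1=1$, $E_G(8)=4-2=2$, and $E_G(2^a)=4-4=0$ for $a\ge 4$.

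Finally, writing $n=2^jm$ with $m$ odd and using multiplicativity, $E_G(n)=E_G(2^j)\prod_{p^a\parallel m}E_G(p^a)$; the product over the odd primes is $1$ if $m$ is squarefree and $0$ otherwise, while the $2$-factor $E_G(2^j)$ equals $c_0=1$ ($j=0$), $c_2=1$ ($j=2$), $c_3=2$ ($j=3$), and vanishes for $j=1$ and for $j\ge 4$. Combining the two factors yields precisely \eqref{E_G_qudratic}. The only step carrying any content is the $p=2$ solution count, where one must separate the three regimes $a=1$, $a=2$, $a\ge 3$; everything else is routine bookkeeping with $\mu$ at prime powers.
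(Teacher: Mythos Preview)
Your proposal is correct and follows essentially the same approach as the paper: specialize Theorem \ref{theorem_E} to $r=1$ with $g_1(x)=x^2-1$, use the classical solution counts $N(p^a)$ of $x^2\equiv 1$ (mod $p^a$), and conclude via multiplicativity. You spell out the telescoping $E_G(p^a)=N_G(p^a)-N_G(p^{a-1})$ and the resulting prime-power values more explicitly than the paper does, but the argument is the same.
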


\begin{proof} Apply formula \eqref{repr_E} in the case $r=1$, $g_1(x)=x^2-1$.
For the number $N(p^a)$ of solutions of the congruence $x^2\equiv 1$
(mod $p^a$) it is known (see eg., \cite[Ch.\ 3]{NivZucMont1991}),
that $N(p^a)=2$ ($p$ odd prime, $a\in \N$), $N(2)=1$, $N(4)=2$,
$N(2^{\ell})=4$ ($\ell \ge 3$). Now, \eqref{E_G_qudratic} is
obtained by using the multiplicativity of the involved functions.
\end{proof}

\section{The function $R_G$}

Consider now the integer valued function $R_G$ defined by
\eqref{def_func_R_G}.

\begin{theorem} \label{theorem_R} If $G$ is an arbitrary system of polynomials with
integer coefficients, then for any $m_1,\ldots,m_r\in \N$,
\begin{equation} \label{repr_R}
R_G(m_1,\ldots,m_r) =  \phi(m) \sum_{d_1\mid m_1,\ldots,d_r\mid m_r}
\frac{d_1\mu(m_1/d_1) \cdots d_r\mu(m_r/d_r)}
{\phi(\lcm(d_1,\ldots,d_r))} \eta_G(d_1,\ldots,d_r).
\end{equation}
\end{theorem}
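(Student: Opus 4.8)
The plan is to follow the strategy of the proof of Theorem~\ref{theorem_E}: expand each Ramanujan sum by means of \eqref{Ramanujan_repr} and interchange the order of summation. The new feature is that the outer summation over $k$ is restricted to residues coprime to $m$, so the inner count will be governed by Lemma~\ref{lemma_phi} rather than by a plain count of integers in an arithmetic progression.

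First I would substitute \eqref{Ramanujan_repr} into \eqref{def_func_R_G}, obtaining
\[
R_G(m_1,\ldots,m_r) = \sum_{\substack{k=1\\ \gcd(k,m)=1}}^m \ \prod_{i=1}^r \sum_{d_i\mid \gcd(g_i(k),m_i)} d_i\mu(m_i/d_i),
\]
and then interchange the summations to get
\[
R_G(m_1,\ldots,m_r) = \sum_{d_1\mid m_1,\ldots,d_r\mid m_r} d_1\mu(m_1/d_1)\cdots d_r\mu(m_r/d_r)\, A(d_1,\ldots,d_r),
\]
where $A(d_1,\ldots,d_r)$ denotes the number of integers $k$ with $1\le k\le m$, $\gcd(k,m)=1$ and $g_i(k)\equiv 0 \pmod{d_i}$ for every $i$.

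The core of the proof is the evaluation of $A(d_1,\ldots,d_r)$. Put $\delta:=\lcm(d_1,\ldots,d_r)$. Since $\delta\mid m$, the condition $\gcd(k,m)=1$ forces $\gcd(k,\delta)=1$, which is equivalent to $\gcd(k,d_i)=1$ for all $i$. Hence, reducing $k$ modulo $\delta$, such $k$ correspond precisely to the solutions $x$ modulo $\delta$ of the system \eqref{sim_cong_m} (with each $m_i$ replaced by $d_i$) for which $\gcd(x,d_i)=1$ for all $i$; by definition the number of these residue classes is $\eta_G(d_1,\ldots,d_r)$. For each such class $x$ we have $\gcd(x,\delta)=1$, so Lemma~\ref{lemma_phi} applied with $n=m$ and $d=\delta$ gives that exactly $\phi(m)/\phi(\delta)$ of the integers $k\in[1,m]$ with $k\equiv x\pmod{\delta}$ satisfy $\gcd(k,m)=1$. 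Summing over the $\eta_G(d_1,\ldots,d_r)$ admissible classes yields
\[
A(d_1,\ldots,d_r) = \frac{\phi(m)}{\phi(\lcm(d_1,\ldots,d_r))}\,\eta_G(d_1,\ldots,d_r),
\]
and substituting this back into the double sum produces \eqref{repr_R}.

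The step I expect to be the main obstacle is exactly this evaluation of $A$: one has to notice that the global condition $\gcd(k,m)=1$ collapses to a coprimality condition modulo $\delta$ — so that the admissible residue classes are genuinely counted by $\eta_G$ — and then use Lemma~\ref{lemma_phi} to lift back from residues modulo $\delta$ to the full interval $[1,m]$, picking up the factor $\phi(m)/\phi(\delta)$. The interchange of finite sums and the use of \eqref{Ramanujan_repr} are entirely routine.
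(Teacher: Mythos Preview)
Your proposal is correct and follows essentially the same route as the paper's proof: expand each $c_{m_i}(g_i(k))$ via \eqref{Ramanujan_repr}, interchange the finite sums, and identify the inner count as $(\phi(m)/\phi(\lcm(d_1,\ldots,d_r)))\,\eta_G(d_1,\ldots,d_r)$ by Lemma~\ref{lemma_phi}. You have in fact spelled out the evaluation of the inner sum $A(d_1,\ldots,d_r)$ more carefully than the paper does, explicitly noting why the admissible residue classes modulo $\delta$ are exactly those counted by $\eta_G$ and why Lemma~\ref{lemma_phi} applies to each of them.
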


\begin{proof} As in the proof of Theorem \ref{theorem_E}, we write
\begin{equation*}
R_G(m_1,\ldots,m_r)= \sum_{\substack{k=1\\
\gcd(k,m)=1}}^m \sum_{d_1\mid \gcd(g_1(k),m_1)} d_1 \mu(m_1/d_1)
\ldots \sum_{d_r\mid \gcd(g_r(k),m_r)} d_r \mu(m_r/d_r)
\end{equation*}
\begin{equation*}
= \sum_{d_1\mid m_1,\ldots,d_r\mid m_r} d_1\mu(m_1/d_1)
\cdots d_r \mu(m_r/d_r) \sum_{\substack{1\le k\le m, \gcd(k,m)=1, \\
g_1(k)\equiv 0 \text{ (mod $d_1$)},\ldots, g_r(k)\equiv 0 \text{
(mod $d_r$)}}} 1,
\end{equation*}
where the inner sum is $(\phi(m)/\phi(\lcm(d_1,\ldots,d_r)))
\eta_G(d_1,\ldots,d_r)$ by Lemma \ref{lemma_phi}.
\end{proof}

\begin{cor} If $G$ is any system of polynomials with integer coefficients, then the function
$(m_1,\ldots,m_r) \mapsto R_G(m_1,\ldots,m_r)$ is multiplicative. In
particular, in the one variable case (with $r=1$, $g_1=g$,
$\eta_G=\eta_g$ and $m_1=m$) the function of a single variable
\begin{equation} \label{R_g}
R_g(m):= \sum_{\substack{k=1\\
\gcd(k,m)=1}}^m c_m(g(k)) = \phi(m) \sum_{d\mid m}
\frac{d\mu(m/d)}{\phi(d)}\eta_g(d),
\end{equation}
is multiplicative.
\end{cor}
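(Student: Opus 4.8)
The plan is to read off the multiplicativity of $R_G$ directly from the representation \eqref{repr_R}, exactly in the spirit of the Corollary to Theorem \ref{theorem_E}. Writing $m=\lcm(m_1,\ldots,m_r)$, I would first record that $(m_1,\ldots,m_r)\mapsto \phi(\lcm(m_1,\ldots,m_r))$ is multiplicative, being the composition of the one-variable multiplicative function $\phi$ with the several-variable multiplicative function $\lcm$, as recalled in Section \ref{section_2}. Hence it suffices to prove that the sum
\[
S(m_1,\ldots,m_r):= \sum_{d_1\mid m_1,\ldots,d_r\mid m_r}
\frac{d_1\mu(m_1/d_1)\cdots d_r\mu(m_r/d_r)}{\phi(\lcm(d_1,\ldots,d_r))}\,\eta_G(d_1,\ldots,d_r)
\]
is multiplicative; then $R_G$ is the product of the two multiplicative functions $(m_1,\ldots,m_r)\mapsto \phi(\lcm(m_1,\ldots,m_r))$ and $S$, so it is multiplicative (it is not identically zero, since $R_G(1,\ldots,1)=1$).

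Next I would exhibit $S$ as a convolution $S=f*g$ in the sense of \eqref{convo_functions}, with
\[
g(e_1,\ldots,e_r):=\mu(e_1)\cdots\mu(e_r), \qquad
f(d_1,\ldots,d_r):=\frac{d_1\cdots d_r}{\phi(\lcm(d_1,\ldots,d_r))}\,\eta_G(d_1,\ldots,d_r).
\]
Here $g$ is multiplicative, being a product of copies of the one-variable multiplicative function $\mu$. For $f$: the map $(d_1,\ldots,d_r)\mapsto d_1\cdots d_r$ is multiplicative, the map $(d_1,\ldots,d_r)\mapsto\phi(\lcm(d_1,\ldots,d_r))$ is multiplicative and never vanishes, and $\eta_G$ is multiplicative by Lemma \ref{lemma_sim_cong}; since the product and the quotient of (nonvanishing) multiplicative functions are multiplicative, $f$ is multiplicative. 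As the convolution \eqref{convo_functions} preserves multiplicativity, $S=f*g$ is multiplicative, completing the proof of the first assertion.

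Finally, the one-variable statement is the specialization $r=1$: taking $g_1=g$, $m_1=m$, $d_1=d$, formula \eqref{repr_R} becomes \eqref{R_g} because $\lcm(d)=d$ and $\eta_G=\eta_g$, and the multiplicativity of $R_g$ follows from the general case (equivalently, directly: $R_g$ is $\phi$ times the ordinary Dirichlet convolution of the multiplicative function $d\mapsto d\,\eta_g(d)/\phi(d)$ with $\mu$). I do not anticipate any genuine obstacle; the only point requiring a moment's care is checking that each ingredient in \eqref{repr_R} really is multiplicative as a function of the $r$-tuple and that the denominators $\phi(\lcm(\cdot))$ pose no difficulty, which they do not since $\phi$ is everywhere positive.
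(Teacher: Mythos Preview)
Your argument is correct and follows the same approach as the paper: you use Theorem~\ref{theorem_R} to express $R_G$ via \eqref{repr_R} and then invoke Lemma~\ref{lemma_sim_cong} together with the closure of multiplicative functions under products, quotients, and the convolution \eqref{convo_functions}. The paper's proof is a one-line version of exactly this, simply stating that $R_G$ is the convolution of multiplicative functions; your write-up supplies the details that the paper leaves implicit.
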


\begin{proof} By Theorem \ref{theorem_R} and Lemma \ref{lemma_sim_cong}
the function $R_G$ is the convolution of multiplicative functions,
hence it is multiplicative.
\end{proof}

\begin{cor} Assume that $g_1=\ldots =g_r=g$ and $m_1,\ldots,m_r\in \N$ are pairwise
relatively prime. Then
\begin{equation*}
R_G(m_1,\ldots,m_r)= R_g(m)
\end{equation*}
\end{cor}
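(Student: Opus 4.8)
The plan is to reduce the several-variable sum in \eqref{def_func_R_G} to the one-variable sum $R_g(m)$ by exploiting the multiplicativity of the Ramanujan sum in its lower index. First I would record that, since $m_1,\ldots,m_r$ are pairwise relatively prime, $m=\lcm(m_1,\ldots,m_r)=m_1\cdots m_r$; consequently the summation range $1\le k\le m$, $\gcd(k,m)=1$ occurring in \eqref{def_func_R_G} is exactly the one appearing in \eqref{R_g}, and moreover $\gcd(k,m)=1$ is equivalent to $\gcd(k,m_i)=1$ for every $i$.

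The key step is the pointwise identity $c_{m_1}(j)\cdots c_{m_r}(j)=c_m(j)$, valid for every $j\in\Z$ whenever $m_1,\ldots,m_r$ are pairwise coprime with product $m$. This follows from representation \eqref{Ramanujan_repr}: for fixed $j$ the function $n\mapsto c_n(j)=\sum_{d\mid n,\, d\mid j} d\,\mu(n/d)$ is the Dirichlet convolution of the multiplicative function $\mu$ and the multiplicative function $d\mapsto d\cdot[d\mid j]$, hence is itself multiplicative in $n$; applying this multiplicativity across the coprime factors $m_1,\ldots,m_r$ gives the displayed identity. Taking $j=g(k)$ yields $c_{m_1}(g(k))\cdots c_{m_r}(g(k))=c_m(g(k))$ for each $k$.

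Substituting this into the definition \eqref{def_func_R_G} and using the identification of the two summation ranges noted above gives $R_G(m_1,\ldots,m_r)=\sum_{k=1,\,\gcd(k,m)=1}^{m} c_m(g(k))=R_g(m)$, which is the claim. I expect no real obstacle here; the only point requiring (routine) care is the justification that $n\mapsto c_n(j)$ is multiplicative, which could equally well be quoted from standard references such as \cite{McC1986,SchSpi1994}.

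An alternative route avoids this identity and instead uses the multiplicativity of $R_G$ as a function of several variables (the previous corollary, via Theorem \ref{theorem_R} and Lemma \ref{lemma_sim_cong}): since $c_1\equiv 1$, putting all but one coordinate equal to $1$ recovers the one-variable function, $R_G(1,\ldots,1,m_i,1,\ldots,1)=R_g(m_i)$, so pairwise coprimality and several-variable multiplicativity give $R_G(m_1,\ldots,m_r)=R_g(m_1)\cdots R_g(m_r)$; then the one-variable multiplicativity of $R_g$ exhibited in \eqref{R_g} gives $R_g(m_1)\cdots R_g(m_r)=R_g(m_1\cdots m_r)=R_g(m)$.
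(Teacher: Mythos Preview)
Both routes you outline are correct. Your first argument is genuinely different from, and more elementary than, the paper's: the paper does not work from the definition \eqref{def_func_R_G} but instead starts from the convolution representation \eqref{repr_R}, observes that for pairwise coprime $m_i$ and $d_i\mid m_i$ one has $\eta_G(d_1,\ldots,d_r)=\eta_g(d_1\cdots d_r)=\eta_g(d_1)\cdots\eta_g(d_r)$ and $\phi(\lcm(d_1,\ldots,d_r))=\phi(d_1)\cdots\phi(d_r)$, so the multiple sum factors as $R_g(m_1)\cdots R_g(m_r)$, and then invokes the one-variable multiplicativity of $R_g$ from \eqref{R_g}. Your direct use of the multiplicativity of $n\mapsto c_n(j)$ bypasses Theorem~\ref{theorem_R} entirely and gives the result in one line; the price is that it is specific to the Ramanujan-sum setting, whereas the paper's argument via \eqref{repr_R} would transfer verbatim to any analogous sum built from a multiplicative kernel.

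Your alternative route is essentially a streamlined version of the paper's proof: both reach the intermediate factorization $R_G(m_1,\ldots,m_r)=R_g(m_1)\cdots R_g(m_r)$ and finish the same way, but you obtain the factorization abstractly from the several-variable multiplicativity of $R_G$ (the previous corollary) together with $c_1\equiv 1$, rather than by manipulating the explicit formula. This is cleaner and requires no computation with $\eta_G$ or $\phi$.
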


\begin{proof} For any $d_1\mid m_1, \ldots, d_r\mid m_r$ we have
$\eta_G(\lcm(d_1,\ldots,d_r))=\eta_g(d_1\cdots d_r)=
\eta_g(d_1)\cdots \eta_g(d_r)$ and we obtain from \eqref{repr_R}
that
\begin{equation*}
R_G(m_1,\ldots,m_r) = \phi(m_1) \sum_{d_1\mid m_1} \frac{d_1
\mu(m_1/d_1)}{\phi(d_1)}\eta_g(d_1) \cdots \phi(m_r) \sum_{d_r\mid
m_r} \frac{d_r \mu(m_r/d_r)}{\phi(d_r)}\eta_g(d_r)
\end{equation*} \begin{equation*}
=R_g(m_1)\cdots R_g(m_r)=R_g(m),
\end{equation*}
by the multiplicativity of the function \eqref{R_g}.
\end{proof}

\begin{cor} \label{cor_8} For every $m_1,\ldots,m_r\in \N$ and every $\a
=(a_1,\ldots,a_r)\in \Z^r$,
\begin{equation*}
R_{(\a)}(m_1,\ldots,m_r):= \sum_{\substack{k=1\\
\gcd(k,m)=1}}^m c_{m_1}(k-a_1)\cdots c_{m_r}(k-a_r)
\end{equation*}
\begin{equation*}
= \phi(m) \sum_{d_1\mid m_1,\ldots,d_r\mid m_r}
\frac{d_1\mu(m_1/d_1) \cdots d_r\mu(m_r/d_r)}
{\phi(\lcm(d_1,\ldots,d_r))} \eta_{(\a)}(d_1,\ldots,d_r),
\end{equation*}
where
\begin{equation} \label{eta_a}
\eta_{(\a)}(d_1,\ldots,d_r):=
\begin{cases}
1, & \text{if} \ \gcd(d_i,a_i)=1 \ (1\le i\le r), \\ & \text{ and} \
\gcd(d_i,d_j)\mid a_i-a_j \ (1\le i,j\le r),\\
0, & \text{otherwise}.
\end{cases}
\end{equation}
\end{cor}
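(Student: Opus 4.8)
The plan is to obtain this as a direct specialization of Theorem~\ref{theorem_R}, taking the system $G=(g_1,\ldots,g_r)$ to consist of the linear polynomials $g_i(x)=x-a_i$ ($1\le i\le r$). With this choice one has $R_G=R_{(\a)}$ and $\eta_G=\eta_{(\a)}$, so it remains only to identify the value $\eta_{(\a)}(d_1,\ldots,d_r)$, which by definition is the number of residues $x$ modulo $\lcm(d_1,\ldots,d_r)$ satisfying the simultaneous congruences $x\equiv a_i \pmod{d_i}$ ($1\le i\le r$) and, in addition, $\gcd(x,d_i)=1$ for every $i$.

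First I would ignore the coprimality constraints and apply Lemma~\ref{lemma_cong}: the system $x\equiv a_i \pmod{d_i}$ ($1\le i\le r$) is solvable if and only if $\gcd(d_i,d_j)\mid a_i-a_j$ for all $1\le i,j\le r$, and in that case it has exactly one solution modulo $\lcm(d_1,\ldots,d_r)$. Hence, even before imposing coprimality, $\eta_{(\a)}(d_1,\ldots,d_r)$ is either $0$ or $1$. Next I would note that whenever this unique solution $x$ exists, the relation $x\equiv a_i\pmod{d_i}$ forces $\gcd(x,d_i)=\gcd(a_i,d_i)$, so the extra requirement $\gcd(x,d_i)=1$ is equivalent to the condition $\gcd(d_i,a_i)=1$, which depends only on the given data and not on $x$. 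Conjoining the two conditions yields precisely formula \eqref{eta_a}: $\eta_{(\a)}(d_1,\ldots,d_r)=1$ exactly when $\gcd(d_i,a_i)=1$ for all $i$ and $\gcd(d_i,d_j)\mid a_i-a_j$ for all $i,j$, and $0$ otherwise. Substituting this into \eqref{repr_R} gives the stated identity.

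There is essentially no serious obstacle here; the only point deserving a moment's care is the reduction $\gcd(x,d_i)=\gcd(a_i,d_i)$ and the observation that the solvability condition supplied by Lemma~\ref{lemma_cong} and the $r$ coprimality conditions are independent of each other, so that they may simply be intersected. Everything else is routine bookkeeping, so I would present the argument tersely, citing Theorem~\ref{theorem_R} and Lemma~\ref{lemma_cong} and then reading off \eqref{eta_a}.
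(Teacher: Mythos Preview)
Your argument is correct and coincides with the paper's own proof: specialize Theorem~\ref{theorem_R} to $g_i(x)=x-a_i$, invoke Lemma~\ref{lemma_cong} for the solvability criterion, and observe that $x\equiv a_i\pmod{d_i}$ implies $\gcd(x,d_i)=\gcd(a_i,d_i)$ to convert the coprimality constraint into $\gcd(d_i,a_i)=1$. There is nothing to add.
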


\begin{proof} This is the special case $g_1(x)=x-a_1,\ldots, g_r(x)=x-a_r$
of Theorem \ref{theorem_R}. We use Lemma \ref{lemma_cong} again,
with the observation that if $x$ is a solution of the simultaneous
congruences $x \equiv a_1$ (mod $d_1$),..., $x \equiv a_r$ (mod
$d_r$), then $\gcd(x,d_1)= \gcd(a_1,d_1)$, ...,
$\gcd(x,d_r)= \gcd(a_r,d_r)$. Hence we obtain for the
values of $\eta_{(\a)}(d_1,\ldots,d_r)$ formula \eqref{eta_a}.
\end{proof}

\begin{cor} If $m_1,\ldots,m_r\in \N$ are pairwise relatively prime, then
\begin{equation} \label{pairw_rel_prime}
R_{(\a)}(m_1,\ldots,m_r) = \mu(m) c_{m_1}(a_1)\cdots c_{m_r}(a_r).
\end{equation}
\end{cor}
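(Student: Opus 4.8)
The plan is to specialize Corollary~\ref{cor_8} to the pairwise-coprime situation and then factor the resulting multiple sum. Since $m_1,\ldots,m_r$ are pairwise relatively prime, we have $m=\lcm(m_1,\ldots,m_r)=m_1\cdots m_r$, and for any choice of divisors $d_i\mid m_i$ the numbers $d_1,\ldots,d_r$ are themselves pairwise coprime. Hence $\gcd(d_i,d_j)=1$ for all $i\ne j$, so the condition $\gcd(d_i,d_j)\mid a_i-a_j$ appearing in \eqref{eta_a} is automatically satisfied, and moreover $\lcm(d_1,\ldots,d_r)=d_1\cdots d_r$ with $\phi(\lcm(d_1,\ldots,d_r))=\phi(d_1)\cdots\phi(d_r)$.

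Substituting these facts into the formula of Corollary~\ref{cor_8} and writing $\phi(m)=\phi(m_1)\cdots\phi(m_r)$, the sum over $d_1,\ldots,d_r$ separates into a product over the indices, giving
$$R_{(\a)}(m_1,\ldots,m_r)=\prod_{i=1}^{r}\left(\phi(m_i)\sum_{\substack{d_i\mid m_i\\ \gcd(d_i,a_i)=1}}\frac{d_i\mu(m_i/d_i)}{\phi(d_i)}\right).$$
Now I would apply the Brauer--Rademacher identity (Lemma~\ref{Brauer_Rademach}) to each inner sum: it gives $\sum_{d_i\mid m_i,\ \gcd(d_i,a_i)=1}\frac{d_i\mu(m_i/d_i)}{\phi(d_i)}=\frac{\mu(m_i)c_{m_i}(a_i)}{\phi(m_i)}$, so the leading factor $\phi(m_i)$ cancels and the $i$-th slot collapses to $\mu(m_i)c_{m_i}(a_i)$. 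Thus $R_{(\a)}(m_1,\ldots,m_r)=\prod_{i=1}^{r}\mu(m_i)c_{m_i}(a_i)$, and since $\mu$ is multiplicative and the $m_i$ are pairwise coprime, $\mu(m_1)\cdots\mu(m_r)=\mu(m_1\cdots m_r)=\mu(m)$, which is exactly \eqref{pairw_rel_prime}.

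An alternative route, bypassing Corollary~\ref{cor_8}, is to start directly from the definition: by the Chinese Remainder Theorem the residues $k$ modulo $m$ with $\gcd(k,m)=1$ correspond bijectively to tuples $(k_1,\ldots,k_r)$ with $k_i$ modulo $m_i$ and $\gcd(k_i,m_i)=1$, and $c_{m_i}(k-a_i)$ depends only on $k_i$; the defining sum then factors into $r$ copies of the sum in Cohen's identity \eqref{Cohen_id}, each equal to $\mu(m_i)c_{m_i}(a_i)$. I expect no genuine obstacle here: the only points needing care are the identifications $m=m_1\cdots m_r$ and $\mu(m)=\prod_i\mu(m_i)$, together with the observation that the coprimality conditions in \eqref{eta_a} are trivially met; everything else is routine bookkeeping.
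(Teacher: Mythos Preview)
Your proposal is correct and follows essentially the same route as the paper: specialize Corollary~\ref{cor_8}, observe that the coprimality of the $m_i$ trivializes the $\gcd(d_i,d_j)\mid a_i-a_j$ condition and splits both $\phi(\lcm(d_1,\ldots,d_r))$ and $\phi(m)$ multiplicatively, then apply the Brauer--Rademacher identity to each factor and recombine the $\mu(m_i)$ into $\mu(m)$. Your alternative CRT argument is a pleasant bonus not present in the paper.
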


\begin{proof} If $m_1,\ldots,m_r$ are pairwise relatively prime and $d_1\mid
m_1,\ldots,d_r\mid m_r$, then $\gcd(d_i,d_j)=1$ for any $i\ne j$ and
we obtain from Corollary \ref{cor_8},
\begin{equation*}
R_{\a}(m_1,\ldots,m_r) = \phi(m_1) \sum_{\substack{d_1\mid m_1\\
\gcd(d_1,a_1)=1}}  \frac{d_1\mu(m_1/d_1)}{\phi(d_1)}
\cdots \phi(m_r) \sum_{\substack{d_r\mid m_r\\
\gcd(d_r,a_r)=1}} \frac{d_r\mu(m_r/d_r)}{\phi(d_r)}
\end{equation*}
\begin{equation*}
= \mu(m_1)c_{m_1}(a_1)\cdots \mu(m_r) c_{m_r}(a_1)=
\mu(m)c_{m_1}(a_1)\cdots c_{m_r}(a_r),
\end{equation*}
by the Brauer-Rademacher identity \eqref{Brauer_Rademacher}.
\end{proof}

Here \eqref{pairw_rel_prime} is an extension of Cohen's formula
\eqref{Cohen_id} for several variables. Note that in the case
$a_1=\ldots =a_r=a$ the right hand side of \eqref{pairw_rel_prime}
is $\mu(m)c_m(a)$.

\begin{cor} If $m_1,m_2\in \N$ with $m=\lcm(m_1,m_2)$ and $a_1,a_2\in \Z$, then
\begin{equation*}
R_{(a_1,a_2)}(m_1,m_2):= \sum_{\substack{k=1\\
\gcd(k,m)=1}}^m c_{m_1}(k-a_1) c_{m_2}(k-a_2)
\end{equation*}
\begin{equation} \label{cor_10}
= \phi(m) \sum_{\substack{d_1\mid m_1, d_2\mid m_2\\
\gcd(d_1,a_1)=1,\gcd(d_2,a_2)=1\\ \gcd(d_1,d_2)\mid a_1-a_2}}
\frac{d_1\mu(m_1/d_1)d_2\mu(m_2/d_2)} {\phi(\lcm(d_1,d_2))}.
\end{equation}

Furthermore, if $\gcd(a_1,m_1)=\gcd(a_2,m_2)=1$ and $|a_1-a_2|=1$,
then
\begin{equation} \label{psi_phi}
R_{(a_1,a_2)}(m_1,m_2) =
\begin{cases} (-1)^{\omega(\gcd(m_1,m_2))} \psi(\gcd(m_1,m_2)), & \text{
if $m_1$ and  $m_2$ are squarefree}, \\
0, & \text{ otherwise}, \end{cases}
\end{equation}
where $\psi(n)=n\prod_{p\mid n} (1+1/p)$ is the Dedekind function.
\end{cor}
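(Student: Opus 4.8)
The first identity \eqref{cor_10} is just the case $r=2$ of Corollary \ref{cor_8}, obtained by substituting the explicit values \eqref{eta_a} of $\eta_{(\a)}(d_1,d_2)$. For the second assertion the plan is to simplify this sum under the hypotheses $\gcd(a_1,m_1)=\gcd(a_2,m_2)=1$, $|a_1-a_2|=1$, and then to read off its value at prime powers. Since every $d_1\mid m_1$ satisfies $\gcd(d_1,a_1)\mid\gcd(m_1,a_1)=1$, and similarly $\gcd(d_2,a_2)=1$, the conditions $\gcd(d_i,a_i)=1$ are automatic; moreover $\gcd(d_1,d_2)\mid a_1-a_2=\pm1$ is equivalent to $\gcd(d_1,d_2)=1$, whence $\lcm(d_1,d_2)=d_1d_2$ and $\phi(\lcm(d_1,d_2))=\phi(d_1)\phi(d_2)$. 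Thus \eqref{cor_10} becomes
\begin{equation*}
R_{(a_1,a_2)}(m_1,m_2)=\phi(m)\sum_{\substack{d_1\mid m_1,\ d_2\mid m_2\\ \gcd(d_1,d_2)=1}}\frac{d_1\mu(m_1/d_1)}{\phi(d_1)}\cdot\frac{d_2\mu(m_2/d_2)}{\phi(d_2)}.
\end{equation*}

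By the Corollary following Theorem \ref{theorem_R}, the map $(m_1,m_2)\mapsto R_{(a_1,a_2)}(m_1,m_2)$ is multiplicative, and the hypotheses descend to the prime-power parts (as $p^{e_p(m_i)}\mid m_i$ forces $\gcd(a_i,p^{e_p(m_i)})=1$), so it is enough to evaluate the displayed sum at $(m_1,m_2)=(p^u,p^v)$. Here $\phi(\lcm(p^u,p^v))=\phi(p^{\max(u,v)})$, and $\gcd(d_1,d_2)=1$ just means $d_1,d_2$ are not both divisible by $p$; only terms with $\mu(p^u/d_1)\mu(p^v/d_2)\ne0$ survive. When exactly one of $u,v$ is positive, one divisor sum collapses to $1$ while the other is $\sum_{d\mid p^a}\frac{d\mu(p^a/d)}{\phi(d)}=\frac{\mu^2(p^a)}{\phi(p^a)}$ — the $k=1$ instance of Lemma \ref{Brauer_Rademach}, using $c_n(1)=\mu(n)$ — so the local factor is $\mu^2(p^a)$. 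A direct check of the remaining configurations ($u=v=0$; $u=v=1$; and every case with $\max(u,v)\ge2$) then gives
\begin{equation*}
R_{(a_1,a_2)}(p^u,p^v)=\begin{cases}1,&(u,v)\in\{(0,0),(1,0),(0,1)\},\\ -(p+1),&(u,v)=(1,1),\\ 0,&\text{otherwise}.\end{cases}
\end{equation*}

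The computation demanding the most care is the case $(u,v)=(1,1)$, where the coprimality constraint genuinely couples the two sums: summing over the three admissible pairs $(1,1),(1,p),(p,1)$ produces $1-\frac{2p}{p-1}=-\frac{p+1}{p-1}$, and multiplication by $\phi(p)=p-1$ yields $-(p+1)$. To finish I would multiply the local factors over all primes: the product vanishes unless every exponent $e_p(m_i)$ is at most $1$, i.e.\ unless $m_1$ and $m_2$ are both squarefree, and in that case the only factors different from $1$ come from the primes $p\mid\gcd(m_1,m_2)$, so $R_{(a_1,a_2)}(m_1,m_2)=\prod_{p\mid\gcd(m_1,m_2)}\bigl(-(p+1)\bigr)=(-1)^{\omega(\gcd(m_1,m_2))}\prod_{p\mid\gcd(m_1,m_2)}(p+1)$. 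Since $\gcd(m_1,m_2)$ is then squarefree and $\psi(n)=\prod_{p\mid n}(p+1)$ for squarefree $n$ (immediate from $\psi(n)=n\prod_{p\mid n}(1+1/p)$), this equals $(-1)^{\omega(\gcd(m_1,m_2))}\psi(\gcd(m_1,m_2))$, which is \eqref{psi_phi}.
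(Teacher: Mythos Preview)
Your proof is correct and follows essentially the same route as the paper: apply Corollary~\ref{cor_8} with $r=2$, use multiplicativity to reduce to prime powers, compute the local table $R_{(a_1,a_2)}(p^u,p^v)$, and multiply. You supply more detail than the paper does (the global simplification of the sum, the Brauer--Rademacher shortcut for the mixed cases, and the explicit $(1,1)$ computation), but the structure and the resulting table of local values are identical.
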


\begin{proof} Apply Corollary \ref{cor_8} in the case $r=2$.
Assuming that $\gcd(a_1,m_1)=\gcd(a_2,m_2) =1$ and $|a_1-a_2|=1$ we
deduce from \eqref{cor_10} that for any prime powers $p^u$, $p^v$
($u,v\in \N \cup \{0\}$),
\begin{equation*}
R_{(a_1,a_2)}(p^u,p^v)= \begin{cases} 1, & \text{
if $u=v=0$ or $u=1$, $v=0$ or $u=0$, $v=1$}, \\
-(p+1), & \text{ if $u=v=1$}, \\
0, & \text{ otherwise},
\end{cases}
\end{equation*}
leading to \eqref{psi_phi} by using the multiplicativity of the
function $(m_1,m_2)\mapsto R_{(a_1,a_2)}(m_1,m_2)$.
\end{proof}

We remark that other special systems $G$ can be considered too. As a
further example, we give the following one ($r=1$, $g_1(x)=x^2-1$).
Its proof is similar to that of Corollary \ref{cor_E_G_quadratic}.

\begin{cor} For every $n\in \N$ write
$n=2^jm$ with $j\in \N \cup \{0\}$ and $m$ odd. Then
\begin{equation*} \label{R_G_qudratic}
\sum_{\substack{k=1\\ \gcd(k,n)=1}}^n c_n(k^2-1) =
\begin{cases} d_j \psi(m), & \text{
if $j\in \{0,1,2,3\}$ and $m$ is squarefree},\\ 0, & \text{
otherwise},
\end{cases}
\end{equation*}
where $d_0=d_1=1$, $d_2=4$, $d_3=16$.
\end{cor}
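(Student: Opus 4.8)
The plan is to recognize the displayed sum as the value $R_g(n)$ of the one-variable function \eqref{R_g} attached to the single polynomial $g(x)=x^2-1$, and then to use the evaluation $R_g(n)=\phi(n)\sum_{d\mid n}\frac{d\mu(n/d)}{\phi(d)}\eta_g(d)$ together with the multiplicativity of $R_g$ (the corollary following Theorem \ref{theorem_R}). Thus it suffices to determine $R_g(p^a)$ at every prime power.

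The key preliminary observation is that for $g(x)=x^2-1$ one has $\eta_g(p^a)=N_g(p^a)$: any $x$ with $x^2\equiv 1\pmod{p^a}$ is automatically coprime to $p$, so the extra condition $\gcd(x,p^a)=1$ imposes nothing. Hence the solution counts recalled in the proof of Corollary \ref{cor_E_G_quadratic} transfer verbatim: $\eta_g(1)=1$; $\eta_g(p^a)=2$ for $p$ an odd prime and $a\ge 1$; and $\eta_g(2)=1$, $\eta_g(4)=2$, $\eta_g(2^{\ell})=4$ for $\ell\ge 3$.

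Next I would substitute these values into \eqref{R_g}. Since $\mu(n/d)=0$ unless $n/d\in\{1,p\}$, only the two top divisors $d=p^a$ and $d=p^{a-1}$ of $p^a$ contribute, so each $R_g(p^a)$ is a two-term alternating sum that is easy to evaluate. For $p$ odd this yields $R_g(p)=p+1=\psi(p)$ and $R_g(p^a)=0$ for $a\ge 2$; for $p=2$ it yields $R_g(2)=1$, $R_g(4)=4$, $R_g(8)=16$, and $R_g(2^a)=0$ for $a\ge 4$. Writing $n=2^j m$ with $m$ odd and using $R_g(n)=R_g(2^j)\prod_{p\mid m}R_g\bigl(p^{v_p(m)}\bigr)$, the product is nonzero precisely when $j\in\{0,1,2,3\}$ and $m$ is squarefree, in which case it equals $d_j\prod_{p\mid m}(p+1)=d_j\,\psi(m)$ with $d_0=R_g(1)=1$, $d_1=R_g(2)=1$, $d_2=R_g(4)=4$, $d_3=R_g(8)=16$. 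This is the claimed formula.

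The only place that needs a little care is the prime $p=2$: because $\eta_g(2^{\ell})$ takes three different values ($1,2,4$) before stabilizing, the moduli $2$, $4$, $8$ each have to be computed individually, and one must verify that the cancellation in the two-term sum sets in only from $2^4$ onward. Everything else is the same routine prime-power bookkeeping already carried out for Corollary \ref{cor_E_G_quadratic}, so no genuine obstacle arises.
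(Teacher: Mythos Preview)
Your proposal is correct and follows essentially the same route the paper indicates: apply the one-variable case \eqref{R_g} of Theorem~\ref{theorem_R} with $g(x)=x^2-1$, use the known solution counts for $x^2\equiv 1$ together with the observation $\eta_g(p^a)=N_g(p^a)$, and conclude by multiplicativity. The paper gives no more detail than ``similar to the proof of Corollary~\ref{cor_E_G_quadratic},'' and your write-up is precisely the natural fleshing-out of that hint.
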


\section{The function $R$}

In this section we investigate the multiplicative function $R$
defined by \eqref{def_func_R}.

\begin{cor} The function $R$ can be represented for arbitrary
$m_1,\ldots,m_r\in \N$ as
\begin{equation} \label{R_eval}
R(m_1,\ldots,m_r)= \phi(m) \sum_{d_1\mid m_1,\ldots,d_r\mid m_r}
\frac{d_1\mu(m_1/d_1) \cdots d_r\mu(m_r/d_r)}
{\phi(\lcm(d_1,\ldots,d_r))}.
\end{equation}
\end{cor}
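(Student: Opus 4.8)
This is the case $\a=(1,1,\ldots,1)\in\Z^r$ of Corollary \ref{cor_8}. First I would observe that with $g_1(x)=\ldots=g_r(x)=x-1$ the function $R_{(\a)}$ of Corollary \ref{cor_8} is exactly the function $R$ defined in \eqref{def_func_R}. Then it only remains to evaluate $\eta_{(\a)}(d_1,\ldots,d_r)$ for this particular $\a$, using formula \eqref{eta_a}.

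The key point is that both conditions in \eqref{eta_a} are satisfied trivially when $a_1=\ldots=a_r=1$: the condition $\gcd(d_i,a_i)=1$ becomes $\gcd(d_i,1)=1$, which always holds, and the condition $\gcd(d_i,d_j)\mid a_i-a_j$ becomes $\gcd(d_i,d_j)\mid 0$, which likewise always holds. Hence $\eta_{(\a)}(d_1,\ldots,d_r)=1$ for every choice of divisors $d_1\mid m_1,\ldots,d_r\mid m_r$. (Equivalently, one may argue directly from Theorem \ref{theorem_R}: for $g_i(x)=x-1$ the simultaneous congruences $x\equiv 1\ (\text{mod }d_i)$ are consistent by Lemma \ref{lemma_cong} with a unique solution $x\equiv 1$ modulo $\lcm(d_1,\ldots,d_r)$, and that solution is automatically coprime to each $d_i$, so $\eta_G(d_1,\ldots,d_r)=1$.)

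Substituting $\eta_{(\a)}\equiv 1$ into the formula of Corollary \ref{cor_8} immediately yields
\begin{equation*}
R(m_1,\ldots,m_r)= \phi(m) \sum_{d_1\mid m_1,\ldots,d_r\mid m_r}
\frac{d_1\mu(m_1/d_1) \cdots d_r\mu(m_r/d_r)}{\phi(\lcm(d_1,\ldots,d_r))},
\end{equation*}
which is \eqref{R_eval}. There is essentially no obstacle here: the entire content of the statement is the observation that the two arithmetic constraints defining $\eta_{(\a)}$ degenerate to trivialities when all the shifts equal $1$, so the corollary is a pure specialization of the machinery already built in Section \ref{section_2} and Theorem \ref{theorem_R}. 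Multiplicativity and integrality of $R$ are inherited from the corresponding statements for $R_G$.
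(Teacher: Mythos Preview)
Your proposal is correct and follows exactly the paper's approach: the paper's proof consists of the single line ``Apply Corollary \ref{cor_8} by choosing $a_1=\ldots =a_r=1$,'' and you have merely spelled out why the conditions defining $\eta_{(\a)}$ in \eqref{eta_a} become vacuous in that case.
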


\begin{proof} Apply Corollary \ref{cor_8} by choosing $a_1=\ldots =a_r=1$.
\end{proof}

The values of $R$ are determined by the next result, which is the
analog of \cite[Lemma 2]{Lis2010}.

\begin{theorem} \label{theorem_R_spec} Let $p^{e_1},\ldots, p^{e_r}$ be any powers of a prime $p$
($e_1,\ldots,e_r\in \N$). Assume, without loss of generality, that
$e:=e_1=e_2=\ldots =e_s>e_{s+1}\ge e_{s+2}\ge \ldots \ge e_r\ge 1$
($r\ge s\ge 1$). Then
\begin{equation*}
R(p^{e_1},\ldots,p^{e_r})= \begin{cases} p^{v+e} (p-1)^{r-s+1}
h_{s}(p), & e>1 \\ (p-1)^r + (-1)^r(p-2), & e=1,
\end{cases}
\end{equation*}
where the integer $v$ is defined by $v=\sum_{j=1}^r e_j-r-e+1$ and
\begin{equation*}
h_s(x)= \frac{(x-1)^{s-1}+(-1)^s}{x}
\end{equation*}
is a polynomial of degree $s-2$ (for $s>1$).
\end{theorem}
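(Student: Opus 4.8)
The plan is to evaluate the right-hand side of \eqref{R_eval} at the prime-power arguments $p^{e_1},\ldots,p^{e_r}$ directly. Since every $m_i$ is a power of the single prime $p$, each divisor $d_i\mid p^{e_i}$ has the form $d_i=p^{f_i}$ with $0\le f_i\le e_i$, and $\mu(m_i/d_i)=\mu(p^{e_i-f_i})$ vanishes unless $f_i\in\{e_i-1,e_i\}$ (when $e_i\ge 1$). Thus in \eqref{R_eval} only $2^r$ terms survive, indexed by a subset $S\subseteq\{1,\ldots,r\}$ recording which indices take $f_i=e_i-1$ (contributing a factor $\mu(p)=-1$) versus $f_i=e_i$ (contributing $\mu(1)=1$). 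For such a choice, $\lcm(d_1,\ldots,d_r)=p^{\max_i f_i}$, and with $\phi(m)=\phi(p^e)=p^{e-1}(p-1)$ and $\phi(p^t)=p^{t-1}(p-1)$ for $t\ge 1$ (and $\phi(1)=1$), each surviving term is an explicit monomial in $p$ times $(-1)^{|S|}$.

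First I would set $M:=\max_i e_i=e$ and split the sum over $S$ according to the value of $\max_i f_i$. The key observation is that $\max_i f_i=e$ precisely when $S$ does \emph{not} contain all of the "top" indices $\{1,\ldots,s\}$ (those with $e_i=e$); if $S\supseteq\{1,\ldots,s\}$ then every top index is lowered by one and, because $e_{s+1}<e$, the maximum drops to $e-1$ exactly. So I would write $R(p^{e_1},\ldots,p^{e_r})$ as a sum of two pieces: the "generic" piece where $\max f_i=e$, over all $S$ with $S\not\supseteq\{1,\ldots,s\}$, and the "exceptional" piece where $\max f_i=e-1$, over all $S\supseteq\{1,\ldots,s\}$. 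In the generic piece the denominator $\phi(p^e)$ cancels $\phi(m)$, leaving $\prod_i p^{f_i}$ summed against $(-1)^{|S|}$; in the exceptional piece the denominator is $\phi(p^{e-1})$, which introduces an extra factor of $p/(p-1)$ when $e>1$ (and needs separate handling when $e=1$, since then $p^{e-1}=1$ and the lcm can be $1$). Factoring the product over $i$ and summing each coordinate independently over $f_i\in\{e_i-1,e_i\}$ with the sign, the generic piece becomes $\phi(p^e)\cdot\frac{1}{\phi(p^e)}\prod_{i=1}^r\bigl(p^{e_i}-p^{e_i-1}\bigr)=\prod_i p^{e_i-1}(p-1)$ \emph{minus} the terms with $S\supseteq\{1,\ldots,s\}$, which is where the polynomial $h_s$ will emerge after collecting powers of $p$.

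Concretely, I expect that after pulling out the common factor $p^{v+e}$ (with $v=\sum e_j-r-e+1$), the generic-minus-correction bookkeeping yields $(p-1)^{r-s+1}$ times $\bigl[(p-1)^{s-1}+(-1)^s\cdot(\text{something})\bigr]$, and the exceptional piece supplies the remaining $(-1)^s$-type term so that the bracket collapses to $x\,h_s(x)+\text{const}$ evaluated at $x=p$; checking that $h_s(x)=\frac{(x-1)^{s-1}+(-1)^s}{x}$ is a genuine polynomial of degree $s-2$ is just the remark that $(x-1)^{s-1}+(-1)^s$ has $x=0$ as a root. For the case $e=1$ all $e_i=1$, so $s=r$, $v=0$, and $\phi(\lcm)$ is either $1$ (when $S=\{1,\ldots,r\}$, so all $d_i=1$) or $p-1$ (otherwise); the sum over $S$ then gives $\phi(p)\bigl[\frac{1}{p-1}\sum_{S\subsetneq\{1,\ldots,r\}}(-1)^{|S|}(p-1)^{[S\ne\emptyset]}\cdot\text{(appropriate power)}+(-1)^r\bigr]$, which I would simplify to $(p-1)^r+(-1)^r(p-2)$ by a short binomial computation. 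The main obstacle is purely organizational: correctly separating the unique subset $S=\{1,\ldots,s\}$-and-supersets that causes the lcm (hence the denominator) to drop, and then tracking the resulting mismatch in powers of $p-1$ between that exceptional block and the generic block so that everything reassembles into the stated closed form; once the split is made cleanly, each half is a routine product of one-variable geometric-type sums.
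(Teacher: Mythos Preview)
Your approach is sound and, once the bookkeeping is carried through, gives exactly the stated formula; but it is \emph{not} the route the paper takes. The paper works directly from the definition \eqref{def_func_R} rather than from the convolution representation \eqref{R_eval}. Observing that $c_{p^{e}}(k-1)=0$ unless $p^{\,e-1}\mid k-1$, it parametrises the surviving $k$ as $k=1+dp^{\,e-1}$ with $0\le d\le p-1$. For $e>1$ the coprimality condition $\gcd(k,p^{e})=1$ is then automatic, so the sum coincides with $p^{e}E(p^{e_1},\ldots,p^{e_r})$, and the paper simply quotes the already known closed form for $E$ at prime powers from \cite{Lis2010}. The case $e=1$ is a two-line direct evaluation.

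Your method, by contrast, expands \eqref{R_eval} into the $2^{r}$ surviving M\"obius terms and splits according to whether the subset $S$ of ``lowered'' indices contains all $s$ top indices. This is more laborious but entirely self-contained: you never need the external formula for $E$. Carrying your split through (which you stop just short of doing), the full sum with denominator $\phi(p^{e})$ factors as $p^{\sum e_j-r}(p-1)^{r}$, and the correction from the block $S\supseteq\{1,\ldots,s\}$ contributes $(-1)^{s}p^{\sum e_j-r}(p-1)^{r-s+1}$ when $e>1$, since $\phi(p^{e})/\phi(p^{e-1})=p$; adding these gives $p^{\sum e_j-r+1}(p-1)^{r-s+1}h_{s}(p)=p^{v+e}(p-1)^{r-s+1}h_{s}(p)$ as required. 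For $e=1$ the single exceptional subset $S=\{1,\ldots,r\}$ has $\lcm=1$, and the same add/subtract gives $(p-1)^{r}+(-1)^{r}(p-2)$. So your outline is correct; the paper's argument is shorter only because it outsources the combinatorics to the prior result on $E$.
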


\begin{proof} We do not use \eqref{R_eval}, but the definition \eqref{def_func_R} and obtain
with the notation $k=1+dp^{e-1}$,
\begin{equation*}
R(p^{e_1},\ldots,p^{e_r})= \sum_{\substack{0\le d\le p-1\\
\gcd(1+dp^{e-1},p^e)=1}} c_{p^{e_1}}(dp^{e-1}) \cdots
c_{p^{e_r}}(dp^{e-1}),
\end{equation*}
where all the other terms are zero, cf. proof of \cite[Lemma
2]{Lis2010}. If $e>1$, then the condition $\gcd(1+dp^{e-1},p^e)=1$
is valid for every $d\in \{0,1,\ldots,p-1\}$ and obtain
\begin{equation*}
R(p^{e_1},\ldots,p^{e_r})= \sum_{0\le d\le p-1}
c_{p^{e_1}}(dp^{e-1}) \cdots c_{p^{e_r}}(dp^{e-1})= p^e
E(p^{e_1},\ldots,p^{e_r}),
\end{equation*}
and use the corresponding formula for $E(p^{e_1},\ldots,p^{e_r})$.

For $e=1$ one has $r=s$ and
\begin{equation*}
R(p,\ldots,p)= \sum_{0\le d\le p-2} c_{p}(d) \cdots c_{p}(d)=
\left(\sum_{1\le d\le p-2} (-1)^r+ (p-1)^r \right)
\end{equation*}
\begin{equation*}
= (p-2)(-1)^r+ (p-1)^r.
\end{equation*}
\end{proof}

\begin{cor} All values of the function $R$ are nonnegative.
Furthermore, $R(p^{e_1},\ldots,p^{e_r})=0$ if and only if $e>1$ with
$s=1$ or $s$ odd and $p=2$.
\end{cor}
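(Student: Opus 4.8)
The plan is to reduce to prime powers and then read everything off Theorem~\ref{theorem_R_spec}. Since $R$ is multiplicative, $R(m_1,\ldots,m_r)=\prod_p R(p^{e_p(m_1)},\ldots,p^{e_p(m_r)})$, so the nonnegativity of $R$ will follow once every local factor is shown to be $\ge 0$. Such a factor may have some exponents equal to $0$; but $c_1(k)=1$ for all $k$, so a coordinate with modulus $1$ simply drops out of the defining sum \eqref{def_func_R}, whence $R$ at a prime-power tuple with some zero exponents equals $R$ at the shorter tuple obtained by deleting those coordinates. Hence, by induction on the number of variables, it suffices to analyse $R(p^{e_1},\ldots,p^{e_r})$ with $e_1,\ldots,e_r\ge 1$, which is exactly the situation of Theorem~\ref{theorem_R_spec}; that analysis will also yield the stated vanishing criterion.

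I would then treat separately the two cases of that theorem. If $e=1$ (so $r=s$), then $R(p,\ldots,p)=(p-1)^r+(-1)^r(p-2)$, which equals $1$ when $p=2$, equals $(p-1)^r+(p-2)\ge 1$ when $r$ is even, and equals $(p-1)^r-(p-2)\ge (p-1)-(p-2)=1$ when $r$ is odd and $p\ge 3$. So whenever $e=1$ the value is $\ge 1$, in particular positive and never zero.

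Now suppose $e>1$, so that $R(p^{e_1},\ldots,p^{e_r})=p^{v+e}(p-1)^{r-s+1}h_s(p)$. Here $v=\sum_j e_j-r-e+1\ge se+(r-s)-r-e+1=(s-1)(e-1)\ge 0$, using $e_1=\cdots=e_s=e$ and $e_{s+1},\ldots,e_r\ge 1$; thus $p^{v+e}>0$, and $(p-1)^{r-s+1}>0$ since $r\ge s\ge 1$. Consequently the sign of $R$ is the sign of $h_s(p)=\bigl((p-1)^{s-1}+(-1)^s\bigr)/p$, equivalently of $(p-1)^{s-1}+(-1)^s$, and the core of the argument is the elementary case check on this expression: it is $1-1=0$ when $s=1$; it is $(p-1)^{s-1}+1\ge 2$ when $s$ is even; and when $s\ge 3$ is odd it is $(p-1)^{s-1}-1$, which is $0$ for $p=2$ and $\ge 2^{s-1}-1>0$ for $p\ge 3$. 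Putting the subcases together with the $e=1$ case gives $R\ge 0$ throughout, and shows that $R(p^{e_1},\ldots,p^{e_r})=0$ precisely when $e>1$ and either $s=1$ or ($s$ odd and $p=2$); nonnegativity of the general values then follows from the product formula. I do not expect any genuine obstacle: the only delicate point is keeping careful track of the parity of $s$ and of the exceptional prime $p=2$, together with the harmless reduction that erases zero exponents.
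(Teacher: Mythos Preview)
Your argument is correct and is exactly the intended derivation from Theorem~\ref{theorem_R_spec}: the paper states this corollary without proof, and the case analysis you carry out on $(p-1)^r+(-1)^r(p-2)$ and on $h_s(p)=\bigl((p-1)^{s-1}+(-1)^s\bigr)/p$ is precisely what is implicitly being invoked. The reduction step (dropping coordinates with exponent $0$ and using multiplicativity for the global nonnegativity) is also handled correctly.
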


\begin{cor}  For every prime $p$ and every $e_1\ge e_2\ge 1$,
\begin{equation*}
R(p^{e_1},p^{e_2})= \begin{cases} 0, & e_1> e_2 \ge 1,\\
p^{2e-1}(p-1), & e_1=e_2=e>1, \\ p^2-p-1, & e_1=e_2=1.
\end{cases}
\end{equation*}
\end{cor}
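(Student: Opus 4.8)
This is the case $r=2$ of Theorem~\ref{theorem_R_spec}, so the plan is simply to specialize that result and simplify each of the three regimes. First I would fix a prime $p$ and exponents $e_1\ge e_2\ge 1$, and record that in the notation of Theorem~\ref{theorem_R_spec} we have $e=e_1$, and $s$ is the number of the $e_i$ equal to $e$; since $r=2$, either $s=1$ (when $e_1>e_2$) or $s=2$ (when $e_1=e_2$). The whole argument is then three short computations, so I do not expect any genuine obstacle — the only point requiring a little care is to identify $s$ correctly in each case and to evaluate the polynomial $h_s$ at $p$.

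Case $e_1>e_2\ge1$. Here $e=e_1\ge2>1$ and $s=1$, so $h_s(x)=h_1(x)=\frac{(x-1)^0+(-1)^1}{x}=0$, whence Theorem~\ref{theorem_R_spec} gives $R(p^{e_1},p^{e_2})=p^{v+e}(p-1)^{r-s+1}h_1(p)=0$. (Equivalently one may just invoke the Corollary immediately preceding, which states $R(p^{e_1},\ldots,p^{e_r})=0$ exactly when $e>1$ and $s=1$, or $s$ odd with $p=2$.)

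Case $e_1=e_2=e>1$. Now $s=r=2$, and $v=\sum_{j=1}^r e_j-r-e+1=2e-2-e+1=e-1$. Also $h_2(x)=\frac{(x-1)^1+(-1)^2}{x}=\frac{x}{x}=1$, so Theorem~\ref{theorem_R_spec} yields
\[
R(p^{e},p^{e})=p^{v+e}(p-1)^{r-s+1}h_2(p)=p^{(e-1)+e}(p-1)^{1}\cdot1=p^{2e-1}(p-1).
\]
Case $e_1=e_2=1$. Here $e=1$ and $r=2$, so the second branch of Theorem~\ref{theorem_R_spec} applies and gives
\[
R(p,p)=(p-1)^r+(-1)^r(p-2)=(p-1)^2+(p-2)=p^2-p-1.
\]
Collecting the three cases gives the stated formula. \qed
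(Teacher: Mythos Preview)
Your proof is correct and follows exactly the paper's approach: it is simply the case $r=2$ of Theorem~\ref{theorem_R_spec}, and you have carried out the three specializations accurately. The paper states the proof in one line, and your version merely makes the case split and the evaluations of $s$, $v$, and $h_s(p)$ explicit.
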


\begin{proof} This the case $r=2$ of Theorem \ref{theorem_R_spec}.
\end{proof}

In the case $m_1=\ldots =m_r=m$ let
\begin{equation} \label{def_g_r}
g_r(m)=\frac1{m} R(m,\ldots,m)= \frac1{m} \sum_{\substack{k=1 \\
\gcd(k,m)=1}}^m (c_{m}(k-1))^r.
\end{equation}

The following result is the analog of \cite[Prop.\ 11]{Tot2011}
concerning the function $f_r(m)=\frac1{m}E(m,\ldots,m)$.

\begin{theorem} \label{average_order_g_r}
Let $r\ge 2$. The average order of the function $g_r(m)$ is
$\alpha_r m^{r-1}$, where
\begin{equation*}
\alpha_r:=  \prod_p \left(1+\frac{x_r(p)-p^r}{p^{r+1}}+
\frac{p(p-1)h_r(p)-x_r(p)}{p^{r+2}} \right),
\end{equation*}
$x_r(p)=(p-1)^r+(-1)^r(p-2)$ and $h_r(p)$ is defined in Theorem
\ref{theorem_R}.

More exactly, for any $0< \varepsilon < 1$,
\begin{equation} \label{asymp_g_r}
\sum_{m\le x} g_r(m)= \frac{\alpha_r}{r} x^{r} +{\cal
O}(x^{r-1+\varepsilon}).
\end{equation}
\end{theorem}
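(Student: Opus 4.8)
The plan is to treat $g_r(m)$ as a multiplicative function and extract its asymptotics by comparing it, via Dirichlet series, against an explicit power of $m$ times a well-understood ``main'' function, with a rapidly convergent correction factor whose Dirichlet series converges in a half-plane well to the left of the pole.

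First I would record the local values. Since $R$ is multiplicative (in the single-variable diagonal sense $m\mapsto R(m,\ldots,m)$), so is $g_r$, and from Theorem \ref{theorem_R_spec} with $e_1=\cdots=e_r=e$ (so $s=r$ and $v=0$) we get
\begin{equation*}
g_r(p^e)=\frac{1}{p^e}R(p^e,\ldots,p^e)=
\begin{cases} p^{e(r-1)}(p-1)h_r(p), & e\ge 2,\\[1mm]
\dfrac{x_r(p)}{p}, & e=1,\end{cases}
\end{equation*}
where $x_r(p)=(p-1)^r+(-1)^r(p-2)$. Thus $g_r(p)=x_r(p)/p$ and for $e\ge 2$ the ratio $g_r(p^{e})/g_r(p^{e-1})=p^{r-1}$ is constant. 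Consequently
\begin{equation*}
D(s):=\sum_{m=1}^\infty \frac{g_r(m)}{m^s}=\prod_p\left(1+\frac{g_r(p)}{p^s}+\sum_{e\ge 2}\frac{p^{e(r-1)}(p-1)h_r(p)}{p^{es}}\right),
\end{equation*}
and the inner geometric tail sums to $(p-1)h_r(p)\,p^{2(r-1)-2s}/(1-p^{r-1-s})$. The plan is to write $D(s)=\zeta(s-r+1)\,H(s)$, where $\zeta(s-r+1)=\prod_p(1-p^{r-1-s})^{-1}$ supplies the expected pole at $s=r$, and
\begin{equation*}
H(s)=\prod_p\left(1-p^{r-1-s}\right)\left(1+\frac{x_r(p)}{p^{1+s}}+\frac{(p-1)h_r(p)\,p^{2r-2-2s}}{1-p^{r-1-s}}\right).
\end{equation*}
A short computation — clearing the denominator $1-p^{r-1-s}$ inside and multiplying — shows each Euler factor of $H(s)$ equals $1+O(p^{r-1-s})+O(p^{r-2-s})$, more precisely of the form $1+a(p)p^{-s}+b(p)p^{-s}+\cdots$ with $a(p)=O(p^{r-2})$, so that $H(s)$ converges absolutely for $\RE s>r-1$. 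Evaluating $H(r)$ and matching the Euler factor with the stated $\alpha_r$ (each factor becomes $1+(x_r(p)-p^r)/p^{r+1}+(p(p-1)h_r(p)-x_r(p))/p^{r+2}$ after collecting terms) identifies $\alpha_r=H(r)$ and gives the main constant $\alpha_r/r$ via the residue of $\zeta(s-r+1)$ at $s=r$.

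Then I would invoke a standard Perron/Dirichlet-hyperbola argument, or more simply the convolution identity $g_r=\mathrm{Id}_{r-1}\ast h$ where $\mathrm{Id}_{r-1}(m)=m^{r-1}$ and $h$ is the multiplicative function with $\sum h(m)m^{-s}=H(s)$. Summing,
\begin{equation*}
\sum_{m\le x} g_r(m)=\sum_{d\le x} h(d)\sum_{n\le x/d} n^{r-1}
=\sum_{d\le x} h(d)\left(\frac{(x/d)^r}{r}+O\big((x/d)^{r-1}\big)\right).
\end{equation*}
The first piece gives $\frac{x^r}{r}\sum_{d\le x} h(d)/d^r=\frac{x^r}{r}\big(H(r)+O(x^{-1+\varepsilon})\big)$, using that $\sum_{d>x}|h(d)|/d^r\ll x^{r-1-r+\varepsilon}=x^{-1+\varepsilon}$ since $|h(d)|\ll d^{r-2+\varepsilon}$ (from absolute convergence of $H$ at any $\RE s=r-1+\varepsilon$). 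The error piece is $\ll x^{r-1}\sum_{d\le x}|h(d)|/d^{r-1}\ll x^{r-1}\cdot x^{\varepsilon}=x^{r-1+\varepsilon}$, again by the same bound on $|h(d)|$ summed against $d^{-(r-1)}$. Combining yields \eqref{asymp_g_r}.

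The main obstacle is the bookkeeping in the Euler-factor computation: one must verify that the correction factor $H(s)$ is genuinely ``one degree better'' than the pole-producing zeta factor, i.e.\ that after extracting $\zeta(s-r+1)$ the residual Euler factors are $1+O(p^{-s+r-2+\varepsilon})$ rather than only $1+O(p^{-s+r-1})$; this is what makes the error term $x^{r-1+\varepsilon}$ (as opposed to a weaker $x^{r-1+\text{const}}$) and it requires checking that the coefficient of $p^{-s+r-1}$ in $H$'s Euler factor vanishes. Tracking $x_r(p)=(p-1)^r+(-1)^r(p-2)$ and $h_r(p)=\big((p-1)^{r-1}+(-1)^r\big)/p$ as polynomials in $p$ and confirming this cancellation is the one genuinely delicate — though entirely mechanical — step; everything else is the routine Dirichlet-series-to-asymptotics machinery, valid since $r\ge 2$ guarantees the exponents behave and $H$ converges strictly to the left of $\RE s=r$.
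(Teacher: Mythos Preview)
Your approach is exactly the paper's: compute the local values of the multiplicative function $g_r$, factor its Dirichlet series as $\zeta(s-r+1)H(s)$ with $H$ absolutely convergent for $\RE s>r-1$, and then read off the asymptotics from the convolution $g_r=\id_{r-1}*h$. The paper in fact states the same decomposition, recording explicitly that $F_r(p)=a_r(p)$, $F_r(p^2)=b_r(p)$ and $F_r(p^k)=0$ for $k\ge 3$, and then appeals to ``usual estimates''; your hyperbola argument is a correct way to fill in that last line.

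There is, however, a concrete slip in your local computation that would make the key convergence step fail as written. In the diagonal case $e_1=\cdots=e_r=e$ one has
\[
v=\sum_{j=1}^r e_j -r-e+1 = re-r-e+1=(r-1)(e-1),
\]
not $v=0$. Hence $R(p^e,\ldots,p^e)=p^{(r-1)(e-1)+e}(p-1)h_r(p)$ and
\[
g_r(p^e)=p^{(e-1)(r-1)}(p-1)h_r(p)\qquad(e\ge 2),
\]
not $p^{e(r-1)}(p-1)h_r(p)$. With your incorrect value the geometric tail is too large by a factor $p^{r-1}$, the coefficient of $p^{-2s}$ in the Euler factor of $H$ becomes $(p-1)h_r(p)p^{2r-2}-x_r(p)p^{r-2}\sim p^{3r-3}$, and the product $H(s)$ diverges near $\RE s=r-1$; the matching of $H(r)$ with the stated $\alpha_r$ then also fails. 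With the corrected value one gets, after clearing $(1-p^{r-1-s})$, exactly $1+a_r(p)p^{-s}+b_r(p)p^{-2s}$ with $a_r(p)=x_r(p)/p-p^{r-1}=O(p^{r-2})$ and $b_r(p)=p^{r-1}(p-1)h_r(p)-p^{r-2}x_r(p)=(-1)^r p^{r-2}$, which is precisely the cancellation you flag as ``the one genuinely delicate step'' and which gives both the stated $\alpha_r$ and the error $x^{r-1+\varepsilon}$.
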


\begin{proof} Similar to the proof of \cite[Prop.\ 11]{Tot2011}. The function $g_r$
is multiplicative and by Theorem \ref{theorem_R_spec} and
\eqref{def_g_r}, for every prime $p$,
\begin{equation*}
g_r(p)=\frac{x_r(p)}{p}, \qquad g_r(p^e)=p^{(e-1)(r-1)}(p-1)h_r(p)
\quad (e\ge 2).
\end{equation*}

Therefore,
\begin{equation*}
\sum_{m=1}^{\infty} \frac{g_r(m)}{m^s} = \zeta(s-r+1) \prod_p \left(
1+\frac{a_r(p)}{p^s} + \frac{b_r(p)}{p^{2s}}\right)
\end{equation*}
where
\begin{equation*}
a_r(p)=\frac{x_r(p)}{p}-p^{r-1}, \qquad
b_r(p)=p^{r-1}(p-1)h_r(p)-p^{r-2}x_r(p).
\end{equation*}
for $s\in \C$, $\RE s>r$, the infinite product being absolutely
convergent for $\RE s>r-1$. Consequently, $g_r=F_r* \id_{r-1}$ in
terms of the Dirichlet convolution, where $F_r$ is multiplicative
and for any prime $p$, $F_r(p)=a_r(p)$, $F_r(p^2)=b_r(p)$,
$F_r(p^k)=0$ ($k\ge 3$).

The asymptotic formula \eqref{asymp_g_r} follows by usual estimates.
\end{proof}

\section{The function $T_a$} \label{section_T}

For the function $T_a$ defined by \eqref{def_T_a} we prove the
following results. The first one is a modified orthogonality
relation of the Ramanujan sums.

\begin{theorem} For any $m_1,\ldots,m_r\in \N$ and any $a\in \Z$,
\begin{equation} \label{eval_T}
T_a(m_1,\ldots,m_r)= \begin{cases} m^{r-1}\mu(m)c_m(a), & m_1=\ldots=m_r=m, \\
0, & \text{ otherwise}.
\end{cases}
\end{equation}
\end{theorem}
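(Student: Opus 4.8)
The plan is to recognize the multiple sum defining $T_a$ as an iterated Cauchy convolution of Ramanujan sums, all regarded as $m$-even functions, and then to invoke Lemma \ref{lemma_4}. First I would observe that since $m_i \mid m$, each $c_{m_i}$ is not merely $m_i$-even but $m$-even; write $f_i(n) := c_{m_i}(n)$ for $1 \le i \le r$. Because $c_{m_r}$ is an even function of its argument, for fixed $\ell$ one has $c_{m_r}(k_1 + \cdots + k_{r-1} + \ell - a) = c_{m_r}((a - \ell) - k_1 - \cdots - k_{r-1})$, so after the substitution $j_i = k_i$ ($1 \le i \le r-1$) and $j_r = (a-\ell) - k_1 - \cdots - k_{r-1}$ the inner sum over $k_1,\ldots,k_{r-1}$ modulo $m$ becomes exactly the $(r-1)$-fold Cauchy convolution $(f_1 \otimes \cdots \otimes f_r)(a - \ell)$, by unwinding the definition \eqref{Cauchy_convo}. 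Setting $G := f_1 \otimes \cdots \otimes f_r$, which is again $m$-even, this gives $T_a(m_1,\ldots,m_r) = \sum_{\ell \pmod m,\ \gcd(\ell,m)=1} G(a - \ell)$.

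Next I would apply Lemma \ref{lemma_4} with $s = m$ and $f = G$, obtaining $T_a(m_1,\ldots,m_r) = \phi(m) \sum_{d \mid m} \frac{\alpha_G(d)\mu(d)}{\phi(d)} c_d(a)$, and then compute the Fourier coefficients $\alpha_G(d)$. Iterating the product rule $\alpha_{f \otimes g}(d) = m\,\alpha_f(d)\alpha_g(d)$ gives $\alpha_G(d) = m^{r-1}\prod_{i=1}^r \alpha_{f_i}(d)$. Each $f_i = c_{m_i}$, viewed as an $m$-even function, has the trivial Ramanujan--Fourier expansion $c_{m_i}(n) = \sum_{d \mid m}\alpha_{f_i}(d)c_d(n)$ with $\alpha_{f_i}(m_i) = 1$ and $\alpha_{f_i}(d) = 0$ for $d \ne m_i$, by the uniqueness of the coefficients recalled in Section \ref{section_2}. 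Hence $\alpha_G(d) = m^{r-1}$ if $d = m_1 = \cdots = m_r$, and $\alpha_G(d) = 0$ for all $d \mid m$ otherwise.

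It then only remains to read off the cases. If the $m_i$ are not all equal, every coefficient $\alpha_G(d)$ vanishes and $T_a = 0$; if $m_1 = \cdots = m_r$, then $m = \lcm(m_1,\ldots,m_r) = m_1$ and only the term $d = m$ contributes, with $\alpha_G(m) = m^{r-1}$, so $T_a(m_1,\ldots,m_r) = \phi(m)\cdot\frac{m^{r-1}\mu(m)}{\phi(m)}c_m(a) = m^{r-1}\mu(m)c_m(a)$, as claimed. The one step requiring care is the first: checking, via the substitution and the evenness of $c_{m_r}$, that the $(r-1)$-fold inner sum is literally the Cauchy convolution $G$ evaluated at $a - \ell$, and being consistent throughout that all Ramanujan sums are treated as functions even modulo $m$, not modulo their individual moduli. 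Once that identification is in place, the conclusion is a direct application of the $s$-even function machinery of Section \ref{section_2}.
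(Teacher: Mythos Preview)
Your proposal is correct and follows essentially the same route as the paper: identify the inner sum as the iterated Cauchy convolution $c_{m_1}(\cdot)\otimes\cdots\otimes c_{m_r}(\cdot)$ evaluated at $a-\ell$, compute its Ramanujan--Fourier coefficients via the product rule, and then apply Lemma~\ref{lemma_4}. The only cosmetic difference is that the paper makes the convolution identification by introducing auxiliary variables $j$ and $k_r$ with $k_1+\cdots+k_r\equiv j$ and $j+\ell\equiv a$ (mod $m$), whereas you use the evenness of $c_{m_r}$ directly; the substance is identical.
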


\begin{proof} We have
\begin{equation*}
T_a(m_1,\ldots,m_r)= \sum_{\substack{j,\ell \text{ (mod $m$)}\\
j+\ell\equiv a \text{ (mod $m$)}\\ \gcd(\ell,m)=1}}
\sum_{\substack{k_1,\ldots,k_{r-1},k_r \text{ (mod $m$)}
\\ k_1+\ldots +k_{r-1}+k_r\equiv j \text{ (mod $m$)} }}
c_{m_1}(k_1)\cdots c_{m_{r-1}}(k_{r-1})c_{m_r}(k_r)
\end{equation*}
\begin{equation*}
= \sum_{\substack{j,\ell \text{ (mod $m$)}\\
j+\ell\equiv a \text{ (mod $m$)}\\ \gcd(\ell,m)=1}}
\left(c_{m_1}(\DOT)\otimes \cdots \otimes c_{m_{r-1}}(\DOT)\otimes
c_{m_r}(\DOT)\right)(j)
\end{equation*}
\begin{equation*}
= \sum_{\substack{\ell \text{ (mod $m$)}\\
\gcd(\ell,m)=1}} \left(c_{m_1}(\DOT)\otimes \cdots \otimes
c_{m_{r-1}}(\DOT)\otimes c_{m_r}(\DOT)\right)(a-\ell)=
\sum_{\substack{\ell \text{ (mod $m$)}\\
\gcd(\ell,m)=1}} K(a-\ell),
\end{equation*}
where $K=c_{m_1}(\DOT)\otimes \cdots \otimes
c_{m_{r-1}}(\DOT)\otimes c_{m_r}(\DOT)$ is the Cauchy convolution
defined by \eqref{Cauchy_convo}.

The functions $k\mapsto c_{m_i}(k)$ are $m_i$-even, hence also
$m$-even, with $m:=\lcm(m_1,\ldots,m_r)$ ($1\le i\le r$). Their
Fourier coefficients are $\alpha_{c_{m_i}}(d)=1$ for $d=m_i$ and $0$
for $d\ne m_i$ ($d\mid m$). We obtain that the function $K$ is
$m$-even and its Fourier coefficients are
\begin{equation*}
\alpha_K(d)= \begin{cases} m^{r-1}, & \text{ if $d=m_1=\ldots
m_r=m$},
\\ 0, & \text{ otherwise}. \end{cases}
\end{equation*}

Now applying Lemma \ref{lemma_4} for the function $K$ we deduce that
\begin{equation*}
T_a(m_1,\ldots,m_r)=\phi(m) \sum_{d\mid m} \frac{\alpha_K(d)
\mu(d)}{\phi(d)} c_d(a)= \begin{cases} m^{r-1}\mu(m)c_m(a), & m_1=\ldots=m_r=m, \\
0, & \text{ otherwise}.
\end{cases}
\end{equation*}
\end{proof}

If $r=1$, then \eqref{eval_T} reduces to Cohen's identity
\eqref{Cohen_id} given in the Introduction.

\begin{cor} For every $a\in \Z$ the function $(m_1,\ldots,m_r)\mapsto T_a(m_1,\ldots,m_r)$ is multiplicative.
\end{cor}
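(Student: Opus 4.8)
The plan is to read off multiplicativity directly from the closed formula \eqref{eval_T} just established. Write $m=\lcm(m_1,\ldots,m_r)$ and $n=\lcm(n_1,\ldots,n_r)$, and suppose $\gcd(m_1\cdots m_r,\,n_1\cdots n_r)=1$; then in particular $\gcd(m_i,n_j)=1$ for all $i,j$, so $\lcm(m_1n_1,\ldots,m_rn_r)=mn$ with $\gcd(m,n)=1$. First, $T_a(1,\ldots,1)=\mu(1)c_1(a)=1$, so $T_a$ is not identically zero, and it remains to check $T_a(m_1n_1,\ldots,m_rn_r)=T_a(m_1,\ldots,m_r)\,T_a(n_1,\ldots,n_r)$.

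The key combinatorial observation is that, under the coprimality hypothesis, $m_1n_1=\cdots=m_rn_r$ holds if and only if $m_1=\cdots=m_r$ and $n_1=\cdots=n_r$. Indeed, fix a prime $p$; since $\gcd(m_i,n_j)=1$, $p$ cannot divide both some $m_i$ and some $n_j$, so either $v_p(n_i)=0$ for all $i$ (and then equality of the $m_in_i$ forces all $v_p(m_i)$ equal) or symmetrically all $v_p(n_i)$ are equal with all $v_p(m_i)=0$; in every case all $v_p(m_i)$ agree and all $v_p(n_i)$ agree, and letting $p$ vary gives the claim. The converse is immediate.

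Now one splits into cases. If $m_1=\cdots=m_r=m$ and $n_1=\cdots=n_r=n$, then each $m_in_i=mn$, and \eqref{eval_T} gives $T_a(m_1n_1,\ldots,m_rn_r)=(mn)^{r-1}\mu(mn)c_{mn}(a)$; using $\mu(mn)=\mu(m)\mu(n)$ and the multiplicativity of $t\mapsto c_t(a)$ in the index (clear from \eqref{Ramanujan_repr}, as a Dirichlet convolution of multiplicative functions), this factors as $\bigl(m^{r-1}\mu(m)c_m(a)\bigr)\bigl(n^{r-1}\mu(n)c_n(a)\bigr)=T_a(m_1,\ldots,m_r)\,T_a(n_1,\ldots,n_r)$. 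Otherwise at least one of the tuples $(m_i)$, $(n_i)$ is non-constant, so by the observation $(m_in_i)$ is also non-constant, and both sides vanish by \eqref{eval_T}. Hence $T_a$ is multiplicative.

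Alternatively, \eqref{eval_T} exhibits $T_a$ as the product of $(m_1,\ldots,m_r)\mapsto\lcm(m_1,\ldots,m_r)^{r-1}$, $(m_1,\ldots,m_r)\mapsto\mu(\lcm(m_1,\ldots,m_r))$, $(m_1,\ldots,m_r)\mapsto c_{\lcm(m_1,\ldots,m_r)}(a)$, all multiplicative by the remarks of Section \ref{section_2}, together with the indicator of $m_1=\cdots=m_r$, whose multiplicativity is exactly the combinatorial observation; a product of multiplicative functions is multiplicative. The only nontrivial point — and the expected obstacle, though a mild one — is precisely that this last indicator is multiplicative, which genuinely uses the coprimality hypothesis via the $p$-adic valuation argument above.
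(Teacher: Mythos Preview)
Your proof is correct and follows essentially the same approach as the paper: both read multiplicativity off the closed formula \eqref{eval_T}, splitting into the case where all $m_in_i$ are equal (and factoring via the multiplicativity of $\mu$ and of $t\mapsto c_t(a)$) versus the case where they are not. You are simply more explicit than the paper about the combinatorial fact that $m_1n_1=\cdots=m_rn_r$ forces both tuples to be constant under the coprimality hypothesis, and you add a second (equivalent) argument via products of multiplicative functions.
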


\begin{proof} Using the definition of the multiplicativity, let $m_1,\ldots,m_r,n_1,\ldots,n_r\in \N$ and
assume that $\gcd(m_1n_1,\ldots,m_rn_r)=1$. Let
$m:=\lcm(m_1,\ldots,m_r)$, $n:=\lcm(n_1,\ldots,n_r)$. Then \\
$\lcm(m_1n_1,\ldots,m_rn_r)=mn$, where $\gcd(m,n)=1$ and by
\eqref{eval_T} obtain that
\begin{equation*}
T_a(m_1n_1,\ldots,m_rn_r)= \begin{cases} (mn)^{r-1}\mu(mn)c_{mn}(a), & m_1n_1=\ldots=m_rn_r, \\
0, & \text{ otherwise} \end{cases}
\end{equation*}
\begin{equation*} =
\begin{cases} m^{r-1}\mu(m)c_m(a)n^{r-1}\mu(n)c_n(a), & m_1=\ldots=m_r, n_1=\ldots=n_r, \\
0, & \text{ otherwise}\end{cases}
\end{equation*}
\begin{equation*} =T_a(m_1,\ldots,m_r)T_a(n_1,\ldots,n_r).
\end{equation*}
\end{proof}

In the case $r=2$ the following orthogonality relation holds.

\begin{cor}
For any $m_1,m_2\in \N$ and any $a\in \Z$,
\begin{equation*}
\frac1{m} \sum_{\substack{k,\ell \text{ (mod $m$)} \\
\gcd(\ell,m)=1}} c_{m_1}(k)c_{m_2}(k+\ell-a) = \begin{cases} \mu(m)c_m(a), & m_1=m_2=m, \\
0, & \text{ otherwise}.
\end{cases}
\end{equation*}
\end{cor}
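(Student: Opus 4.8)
The plan is to recognize the corollary as the $r=2$ instance of the theorem evaluating $T_a$, so that essentially no new computation is required. First I would unwind the definition \eqref{def_T_a} in the case $r=2$: there the list of indices $k_1,\ldots,k_{r-1}$ collapses to the single index $k_1$ running modulo $m$, together with $\ell$ running over residues mod $m$ with $\gcd(\ell,m)=1$, and the summand becomes $c_{m_1}(k_1)\,c_{m_2}(k_1+\ell-a)$. Renaming $k_1$ as $k$, this is exactly
\begin{equation*}
T_a(m_1,m_2)=\sum_{\substack{k,\ell \text{ (mod $m$)}\\ \gcd(\ell,m)=1}} c_{m_1}(k)\,c_{m_2}(k+\ell-a),
\end{equation*}
so the left-hand side of the corollary equals $\frac1{m}\,T_a(m_1,m_2)$.

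Next I would invoke the already established formula \eqref{eval_T} with $r=2$, which reads $T_a(m_1,m_2)=m\,\mu(m)\,c_m(a)$ when $m_1=m_2=m$, and $T_a(m_1,m_2)=0$ otherwise; here the factor $m^{r-1}$ appearing in \eqref{eval_T} is simply $m$. Dividing both sides by $m=\lcm(m_1,m_2)$ produces precisely the claimed case distinction, which completes the argument.

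Since this is a direct specialization of the preceding theorem, there is no genuine obstacle; the only points to watch are the renaming of indices in \eqref{def_T_a} and the cancellation of the normalizing factor $1/m$ against the $m^{r-1}=m$ coming from \eqref{eval_T}. It is worth adding a sentence placing the statement in context: it is the two-variable form of the modified orthogonality relation for Ramanujan sums furnished by the theorem, incorporating a shift $a$ and an averaging over the units $\ell$ modulo $m$, and it collapses to Cohen's identity \eqref{Cohen_id} in the one-variable situation.
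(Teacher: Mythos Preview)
Your proposal is correct and matches the paper's approach exactly: the corollary is stated in the paper without proof, since it is simply the case $r=2$ of the preceding theorem \eqref{eval_T}, with the factor $m^{r-1}=m$ cancelled by the normalization $1/m$. Your unwinding of the definition \eqref{def_T_a} and the cancellation are precisely what is needed.
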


\section{Acknowledgement}
The author thanks the anonymous referee for a very careful reading
of the manuscript and for many helpful suggestions on the
presentation of this paper. The author thanks also Professor
V.~A.~Liskovets for useful remarks.

\end{document}